\newcommand{\QED}{\hfill\rule{0.5em}{0.5em}\\}
\newcommand{\modu}[0]{\operatorname{{mod}}}
\newenvironment{proof}[1][Proof]%
{\begin{description}\item[\noindent\textbf{#1}:]}
{\QED\end{description}}
\newtheorem{theorem}{Theorem}[section]
\newtheorem{lemma}[theorem]{Lemma}
\newtheorem{corollary}[theorem]{Corollary}
\newtheorem{note}[theorem]{Note}
\newtheorem{definition}{Definition}
\author{Drago Bokal\\Faculty of Natural Sciences and Mathematics,\\
University of Maribor, Slovenia \\
  \texttt{drago.bokal@um.si}
  \and
Gunnar Brinkmann\\ Applied Mathematics, Computer Science and Statistics,\\
Ghent University,\\
Krijgslaan 281 S9,\\
B 9000 Ghent, Belgium\\
 \texttt{gunnar.brinkmann@ugent.be}
 \and
Carol T. Zamfirescu\footnote{also: Department of Mathematics, Babe\c{s}-Bolyai University, Cluj-Napoca, Roumania} \\ Applied Mathematics, Computer Science and Statistics,\\
Ghent University,\\
Krijgslaan 281 S9,\\
B 9000 Ghent, Belgium\\
 \texttt{czamfirescu@gmail.com}
}
\title{The Connectivity of the Dual}
\begin{document}

\date{}

\maketitle

\bigskip

\begin{abstract}

The dual of a polyhedron is a polyhedron --
or in graph theoretical terms: the dual of a $3$-connected plane graph is
a $3$-connected plane graph.
Astonishingly, except for sufficiently large facewidth,
not much is known about the connectivity of the dual on higher surfaces.
Are the duals of
$3$-connected embedded graphs of higher genus $3$-connected, too? If not, which
connectivity guarantees $3$-connectedness of the dual? In this article, we give answers to
some of these and related questions. We prove that there is no connectivity
that guarantees the $3$-connectedness or $2$-connectedness of the dual  for every genus,
and give upper bounds for the minimum genus for which (with $c>2$) a $c$-connected
embedded graph with a dual that has a $1$- or $2$-cut can occur.
 We prove that already on the torus,
we need $6$-connectedness
to guarantee $3$-connectedness of the dual and $4$-connectedness
to guarantee $2$-connectedness of the dual.

In the last section, we answer a related question
by Plummer and Zha on orientable embeddings of highly connected non-complete graphs.

\end{abstract}

%\begin{keyword}graph \sep embedding \sep genus \sep  dual \sep connectivity
%\MSC 5C10 Planar graphs, geometric and topological aspects; 5C40 Connectivity
%\end{keyword}

\maketitle

\newcommand{\ignore}[1]{}

\section{Introduction}

Relations between dual polyhedra have been observed at least since Kepler in 1619 \cite{kpl},
but it was not until several centuries later that duality was formally defined. One of the first definitions
was given by Br\"{u}ckner \cite{bru,wen}. With graph embeddings, duality can be abstractly defined
for any embedded graph: the dual $G^\ast$ of an embedded graph $G$ is an embedded graph, whose vertices
are faces of $G$; two faces of $G$ being adjacent as vertices of $G^\ast$ whenever they share an
edge in $G$ \cite{moh,graphs_on_surfaces}.
It is folklore that the dual of a plane polyhedral embedding (i.e.\ of a 3-connected plane graph)
is again a plane polyhedral embedding and in the planar case a graph and its dual are often considered to be
{\em almost the same} -- e.g. some algorithms for listing certain classes of cubic plane graphs
work in fact by generating the corresponding dual graphs -- that is: triangulations -- and dualizing them for output.
Mohar (\cite{moh}, Proposition 3.8, Proposition 3.9, Proposition 3.2)
generalized the relation between the connectivity of an embedded graph and its dual to higher surfaces in the following restricted setting
 -- with $\operatorname{fw}(G)$ the facewidth of the
embedded graph $G$:

\begin{theorem}[\cite{moh}]
\label{pr:base}
Let $G$ be an embedded graph of genus $g>0$,
$G^\ast$ the dual embedded graph,
and $c\in \{1,2,3\}$.
Then, the following are equivalent:
\begin{itemize}
\item $\operatorname{fw}(G)\ge c$ and $G$ is $c$-connected,
\item $\operatorname{fw}(G^\ast)\ge c$ and $G^\ast$ is $c$-connected.
\end{itemize}
\end{theorem}

The theorem cannot be extended to $c\ge 4$ due to triangular faces, which -- except for trivial cases -- imply a 3-cut in the dual.

In this contribution, we are mainly interested in {\em simple graphs}, that is graphs that have neither double edges nor loops, but {\em multigraphs},
that is graphs with multiedges and loops allowed,
are used as tools too. In places where it is not obvious from the context which kind of graph is dealt with, we explicitly use the term {\em simple graph}
or {\em multigraph} instead of just {\em graph}.
We study simple embedded graphs with simple duals
and the general case of the
relationship between the connectivity of the graph and its dual
without restrictions on the facewidth. All cuts discussed in this article are vertex cuts.

Our main results are that already on the torus, even the dual of a $5$-connected
graph need not be $3$-connected and that, for each $c>0$, there is a genus $g$
and an embedded $c$-connected graph $G$ of this genus so that the dual has a $1$-cut.
We give
upper bounds for the minimum genus $g$ with this property.
Note that in simple graphs there is an essential difference between $3$-cuts
on one side and $1$- and $2$-cuts on the other: unlike $3$-cuts,
$1$- or $2$-cuts  in the dual cannot occur as trivial cuts resulting from facial cycles.

Unless explicitly mentioned otherwise, all embedded graphs in this article
are connected. They come with a combinatorial embedding
in an oriented manifold. We will deal with embeddings only in their combinatorial representation. The equivalence of
this representation with the topological description is well described in standard books on topological graph theory
like \cite{top_graph_theory,graphs_on_surfaces}.
A combinatorial embedding in an oriented manifold is given in the following way: all undirected edges $\{v,w\}$ are interpreted
as a pair of directed edges $(v,w)$ and $(w,v)$, where $(v,w)^{-1}=(w,v)$. For each vertex $v$,  all incident
directed edges $(v,.)$ are assigned
a cyclic order around the vertex, called the \emph{(local) rotation}, so that for an edge $(v,x)$, we can talk about the previous and next
edge around $v$. The set of all cyclic orders is called a \emph{rotation system}.
Faces are cyclic sequences $(v_0,v_1),(v_1,v_2), \dots ,(v_{k-1},v_0)$ of pairwise different
directed edges, so that, for $0\le i \le k-1$, $(v_{(i+1)\modu k},v_{(i+2)\modu k})$ follows the edge
$(v_{(i+1)\modu k},v_i)$ in the cyclic order around $v_{(i+1)\modu k}$.  In this case, we call
$(v_{(i+1)\modu k},v_i),(v_{(i+1)\modu k},v_{(i+2)\modu k})$ an \emph{angle} of the face and say that the face has
size $k$. The genus $g(G)$ of an embedded graph $G$ is given by the Euler formula
$v(G)-e(G)+f(G)=2-2g(G)$ with $v(G)$, $e(G)$, and $f(G)$ the number of vertices, edges and faces, respectively.
This must not be mixed up with the genus of an abstract, not embedded, graph, which is defined as the minimum
of all genera of the different embeddings of the graph. In this article we will only discuss genera of embedded graphs.
The Euler formula and the fact that in a simple graph each face contains at least three edges
imply that $\left\lceil \frac{(c-2)(c-3)}{12}\right\rceil$ is a lower bound on the genus on which any simple graph with minimum degree $c$
can be embedded. The fact that this bound is best possible is the celebrated map colour theorem  \cite{genus_complete_graph} determining
the genus of complete graphs.

We will not only investigate whether some connectivity guarantees 3-connectedness of the dual, but also whether other connectivities
can be guaranteed and in how far this depends on the genus of the embedded graph. To this end we define the function $\delta_k(c)$:

For $c\ge k \ge 1$ we define $\delta_k(c)$ as the minimum genus $s$ so that there is a simple $c$-connected embedded graph $G$
with genus $s$, so that the dual graph $G^*$ is a simple graph with a $k$-cut.

At this point it is not yet clear that such a minimum genus exists, but it will turn out that  $\delta_k(c)$ is well defined for all $c\ge k \ge 1$.

%\[
%\begin{array}{ll}
%\delta_k(c)=  \min \{  s  |  & \exists \text{ simple }c\text{-connected embedded graph } G \text{ with genus } s \text{, so that}\\
% & \text{the dual graph } G^* \text{ is a simple graph with a } k-\text{cut }\}.
%\end{array}
%\]

Some values for $\delta_k(c)$ are known or can be easily determined.
To determine some of the others, we need some definitions and basic results.

\section{Notation and basic results}

In this article, a face and the corresponding vertex in the dual graph are denoted
by the same symbol, so that it makes sense to write $v\in f$ for a vertex $v$ of a graph
and a vertex $f$ of the dual graph if it is contained in a directed edge of the face $f$.

\begin{itemize}

\item Let $G=(V,E)$ be a simple embedded graph and $V_c\subset V$ a cutset in $G$.
  A {\em boundary face} is a face $(v_0,v_1),(v_1,v_2), \dots ,(v_{k-1},v_0)$,
  so that there exist $0\le i<j\le k-1$ with $v_i\in V_c$ and $v_j\in V_c$.
  Note that
  $v_i=v_j$ is possible.
The set
  $F_b$ is the set of all boundary faces.
  For a component $C$ of
$G-V_c$, let $F_b(C)$ be the subset of faces of $F_b$ that contain
  at least one vertex of $C$.

\item The embedded {\em boundary multigraph} $G_b$ is the bipartite
graph with vertex set
$V_c\cup F_b$, where a vertex $v\in V_c$ is adjacent to $f\in F_b$ if $v\in f$.
For each time $v$ occurs in
the closed boundary walk of $f\in F_b$,  there is an edge $\{v,f\}$ and the embedding is given by the rotation
 around $v$, respectively by the boundary walk. We consider the embedding to be
given by the rotation system, so the genus of $G_b$ is bounded from above by the genus of $G$. In general, $G_b$ needs not be connected,
but in all cases where we apply the Euler formula to $G_b$, it will be connected.
Face sizes in this graph will later be used to determine bounds on the size of cut sets in the dual.

\item The multigraph $\bar G_b$ is the graph $G\cup G_b$ where the rotation  around
vertices in $V_c$ is such that the edges to vertices in $F_b$ are inside the corresponding faces.
The genus of $\bar G_b$ is equal to the genus of $G$.

\item For a component $C$ of $G-V_c$, the set $F_C^{int}$ is the set of ({\em interior}) faces  of $C$, that is faces
of $G$ that are not in $F_b$
and contain only vertices of $C\cup V_c$ in the boundary.

\item  If $G$ is an embedded multigraph and $G'=(V',E')$ a subgraph with an embedding induced by $G$,
a ($G'$-)bridge $B$ of $G$ is either a subgraph of $G$ that is a single edge of $G-E'$ with
both ends in $V'$, or a component $C_B$
of $G-V'$ together with the edges of $G$
with one endpoint in $V'$ and one in $C_B$
and the endpoints of these edges
in $G'$.
We say that a bridge $B$ is {\em inside a face} $f$ of $G'$,
if there is an angle $e_1,e_2$ of $f$
so that there is an edge of $B$ in the rotation between $e_1$ and $e_2$.

\item Bridges can be inside different faces. If for a face of $G'$ we have that all bridges
inside this face are inside no other face of $G'$, we call this face {\em simple}, otherwise
{\em bridged}.
Note that if a face $f$ is bridged,
there is at least one other bridged face $f'$
with which $f$ shares bridges, but there could be more.

\item Let $G$ be an embedded multigraph, $G'$ a subgraph with an embedding induced by $G$
and $f$ a simple face of $G'$. We define the {\em internal component} of $f$ as follows: we first replace each vertex
$v$ that occurs $k>1$ times in the facial walk around $f$ by
pairwise different vertices $v_1,\dots ,v_k$. If the angle at the $i$-th occurence of
$v$ is $(v,x),(v,y)$, the neighbours of $v_i$ and the rotation
are given by all edges $(v,z)$ in the cyclic order around $v$ from
$(v,x)$ to $(v,y)$ -- including $(v,x)$ and $(v,y)$.
The internal component of $f$ is then given by all vertices and edges on the modified boundary walk
(which is now a simple cycle)
of $f$ together with all bridges inside $f$. This implies that the boundary corresponding to $f$
in the internal component of $f$ is always a simple cycle.

If the internal component of a simple face $f$ has genus $0$,
we call this face a {\em simple internally plane} face, otherwise a {\em simple internally non-plane} face.

\end{itemize}

The following lemma has the combinatorial version of the Jordan curve
theorem as the special case $g=g'=0$ and $G'$ a cycle. If a subgraph $G'$ of a graph $G$ has bridged faces, then
the edges in a bridge connecting two faces force a higher genus of $G$ than that of $G'$. The same is true for simple faces
containing a bridge in the interior that has itself already a nonzero genus. This is formalized in the following lemma.

\begin{lemma}\label{lem:jordan}

Let $G$ be an embedded multigraph of genus $g$
and $G'$ an embedded (also connected) subgraph of genus $g'$ with the embedding induced by $G$.
Let $b$ denote the number of bridged faces of $G'$ and $s_{np}$ denote the number of
simple internally non-plane faces.
Then $s_{np}+\frac{b}{2} \le g-g'$.

\end{lemma}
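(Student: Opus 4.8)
The plan is to build a single auxiliary embedded graph out of $G'$ by attaching, inside each relevant face, a structure whose genus contribution we can read off, and then compare Euler characteristics. Concretely, start from the embedding of $G'$ on its surface of genus $g'$. For each simple internally non-plane face $f$, glue in (a spanning connected subgraph of) its internal component along the boundary cycle of $f$; since that internal component has genus $\ge 1$, each such gluing forces the genus of the running graph to go up by at least $1$ relative to what a planar filling would give. For the bridged faces, the key observation is that bridges shared between two (or more) bridged faces act like handles joining those faces: if we fill each bridged face with a disk except that we route the shared bridges through, the bridged faces pair up (or group up) and each such interaction contributes at least $1$ to the genus, but each contribution is "used" by at least two faces, which is exactly where the $b/2$ comes from.

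The key steps, in order: (1) Formalize the filling operation. For a simple internally plane face, we may fill it with a disk (contributing $0$). For a simple internally non-plane face, replace the face by its internal component; by definition this component, viewed as an embedded graph with the boundary cycle of $f$ bounding a face, has genus $\ge 1$, and plugging it in raises the ambient genus by at least its own genus $\ge 1$. (2) Handle the bridged faces: here one cannot localize to a single face, so instead contract/delete all bridges not touching bridged faces, fill all non-bridged faces with disks, and observe that what remains is $G'$ plus the bridge structures living in the bridged faces, embedded on a surface of genus exactly $g$ (by construction the total genus is unchanged). (3) Apply Euler's formula $v-e+f=2-2g$ to the original embedding of $G$ restricted/compared with that of $G'$: the difference $g-g'$ equals the total "extra genus" absorbed by the faces of $G'$, and we have shown this extra genus is at least $1$ per simple internally non-plane face and at least $1$ per pair of mutually-bridged faces. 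Summing gives $s_{np} + b/2 \le g - g'$.

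The main obstacle I expect is step (2): making precise the claim that each "bridge-sharing cluster" of bridged faces contributes at least $1$ to the genus, and that distinct clusters contribute additively. A bridge inside two faces $f,f'$ of $G'$ means there is an edge of $B$ that, in the rotation around some vertex of $G'$, sits between an angle of $f$ and is also between an angle of $f'$ — so $B$ "connects" the two face-disks, and capping $f$ and $f'$ off by disks becomes impossible; one extra handle is needed per independent such connection. The cleanest way to nail this down is probably an induction on the number of bridged faces: pick a bridged face $f$ with a bridge shared with $f'$, surger along a dual curve through that bridge to reduce $b$ by $2$ and $g$ by (at least) $1$, simultaneously not affecting $s_{np}$, then invoke the inductive hypothesis. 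One must check that this surgery keeps $G'$ (and the counted faces) intact and only decreases $g$ by the claimed amount; the base case $b=0$ is the statement $s_{np}\le g-g'$, itself provable by the disk-filling argument of steps (1) and (3).
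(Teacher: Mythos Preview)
Your high-level plan --- induction, with each simple internally non-plane face contributing $+1$ to $g-g'$ and each pair of mutually bridged faces contributing $+1$ --- is exactly the paper's strategy, and your treatment of the simple faces (glue in the internal component, read off the genus increment from Euler's formula) matches the paper's opening computation essentially verbatim.

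The gap is precisely where you flag it, and it is a real one. Your proposed inductive move for bridged faces is to ``surger along a dual curve through that bridge to reduce $b$ by $2$ and $g$ by at least $1$''. But there is no closed curve available that does this cleanly. If a bridge $B$ is inside both $f$ and $f'$, then (since $B$ is connected and attaches at angles of both) any simple closed curve in the complementary region of $G'$ that separates the $f$-side from the $f'$-side must cross $B$; so your surgery cuts edges of $G$, not just the surface. Once $G$ is altered you no longer control the bridges, and the claims ``$b$ drops by exactly $2$'' and ``$s_{np}$ is unaffected'' both become unclear --- the severed pieces of $B$ may leave $f$ or $f'$ still bridged (via other bridges), or may turn one of them into a new simple internally non-plane face, and the bookkeeping of which direction the inequalities go is no longer automatic.

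The paper avoids all of this by running the induction in the opposite direction: instead of shrinking $g$ by surgery, it \emph{grows $g'$} by enlarging $G'$ inside the fixed ambient graph $G$. Given a bridge $B$ inside $f$ and $f'$, take a path $P\subset B$ from an angle of $f$ to an angle of $f'$ and set $G'_P=G'\cup P$. In $G'_P$ the two faces $f,f'$ merge into one; the path contributes one more edge than vertex and the face count drops by one, so the Euler characteristic of the subgraph drops by $2$ and hence $g'_P=g'+1$ exactly. Now $b'\ge b-2$ (other bridged faces are untouched) and $s'_{np}\ge s_{np}$ (the merged face may become a new simple internally non-plane face, but none are lost), so
\[
s_{np}+\tfrac{b}{2}\ \le\ s'_{np}+\tfrac{b'+2}{2}\ \le\ (g-g'_P)+1\ =\ g-g',
\]
with the middle inequality by the inductive hypothesis applied to $G'_P\subset G$.

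One more small point: the paper's induction variable is the number of edges in $E(G)\setminus E(G')$, not the number of bridged faces. This lets all three cases --- absorbing a simple internally plane face, absorbing a simple internally non-plane face, and adding a path through a shared bridge --- reduce the same counter, with the trivial base case $G=G'$. Your separate ``base case $b=0$'' then needing its own argument for $s_{np}\le g-g'$ is subsumed.
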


\begin{proof}

Note first that if $f$ is a simple face of $G'$ such that the internal
component $C$ has genus $g_{C}$, the subgraph $G'_f$ of $G$ that consists of
all vertices and edges of $G'$ and $C$ has genus $g'_f=g'+g_C$: if $v',e',f'$, respectively $v_C,e_C,f_C$
and $v'_f,e'_f,f'_f$ are the numbers of vertices, edges
and faces of $G'$, resp.\ $C$ and $G'_f$, then -- with $l$ the length of the boundary cycle of $f$
in $C$  -- we have

\[v'_f - e'_f + f'_f = (v'+v_C-l)-(e'+e_C-l)+(f'+f_C-2)=\]
\[ (v'-e'+f')+(v_C-e_C+f_C)-2. \]

This gives by Euler's formula

\[g'_f=\frac{2-(v'_f - e'_f + f'_f)}{2}=\frac{2-((v'-e'+f')+(v_C-e_C+f_C)-2)}{2}=\]
\[\frac{2-((2-2g')+(2-2g_C)-2)}{2}=\frac{2g'+2g_C}{2}=g'+g_C.\]

We will prove the result by induction on the number $d$ of edges that are in $G$, but not in $G'$.
If $d=0$, we have $G=G'$ and $s_{np}=b= g-g'=0$, so the result holds.

If $d>0$ and there is a simple internally plane face $f$ of $G'$ with an internal
component $C$, that contains a bridge,
we can apply induction to $G'_f$, and as (with the notation from above) $g_C=0$,
neither $s_{np}$ nor $b$ or $g'$ change,
the result follows immediately.

If $d>0$ and $s_{np}>0$, let $f$ be a simple internally non-plane face of $G'$ and
$G'_f$ as above. For $G'_f$, we have (with the notation from above)
that $g'_f=g'+g_C\ge g'+1$, and with $s'_{np}$ respectively $b'$ the number of
simple internally non-plane faces of $G'_f$,
respectively the number of bridged faces of $G'_f$,
we have $b'=b$ and $s'_{np}=s_{np}-1$. By induction $s'_{np}+\frac{b'}{2} \le g-g'_f$,
so $s_{np}+\frac{b}{2} = (s'_{np}+1)+\frac{b'}{2} \le g- g'_f +1\le g-g'$.

Let now $f\not= f'$ be bridged faces of $G'$ so that there is a bridge $B$
inside both faces $f$ and $f'$. Let $e,e'$ be edges of $B$ with endpoints in
$f$, $f'$, respectively. Note that $e=e'$ is possible if $B$ is a single edge.
In $B$, there is a path  starting in an
angle of $f$ with edge $e$ and ending in an angle
of $f'$ with edge $e'$. Adding this path to $G'$ to obtain $G'_P$, we
get a graph with the same faces as $G'$ -- except for $f,f'$, which
become one new face.

Because in addition the number of edges added is one larger than the number of vertices added,
we have, for the genus $g'_P$ of $G'_P$,
that $g'_P=g'+1$. Old simple internally non-plane faces
are not changed, but the new face can be
a new simple internally non-plane face. So if $s'_{np}$ denotes the new number
of simple internally non-plane faces, we have $s'_{np}\ge s_{np}$.
The new face can be simple or bridged, but in any case, with $b'$ the number of
bridged faces of $G'_P$, we have $b'\ge b-2$ because other bridged faces stay bridged.
We get
\[ s_{np}+\frac{b}{2} \le s'_{np} + \frac{b'+2}{2} \le s'_{np} + \frac{b'}{2} +1 \le
  g- g'_P +1 =  g-g'.\]

\end{proof}

Let $V_c$ be a cutset of a simple embedded graph $G$ and $f$ be a face. If $f$ contains vertices $v_1,v_2$ of different components
of $G-V_c$, then going from $v_1$ in the two possible directions along $f$ we reach at least two different positions in the facial walk
with vertices from $V_c$ before reaching $v_2$. This implies that $f\in F_b$.

\begin{lemma}\label{lem:boundarycut}

Let $G=(V,E)$ be a simple embedded graph and $V_c\subset V$ a cutset, so that,
for at least two components $C_1,C_2$ of $G-V_c$, we have that $F_{C_1}^{int}$ and $F_{C_2}^{int}$
are not empty.
Then  $F_b(C_1)$ is a cutset in the dual graph $G^*$.

\end{lemma}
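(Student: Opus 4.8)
The plan is to show that every path in $G^\ast$ from a vertex of $F_{C_1}$ to a vertex of $F_{C_2}$ must pass through $F_{b,C_1}$, so that deleting $F_{b,C_1}$ disconnects $F_{C_1}$ from $F_{C_2}$ in the dual; since both sets are nonempty by hypothesis, this makes $F_{b,C_1}$ a cutset. First I would recall that a path in $G^\ast$ corresponds to a sequence of faces $f_0, f_1, \dots, f_m$ of $G$ with $f_i$ and $f_{i+1}$ sharing an edge $e_i$ of $G$. I want to take $f_0 \in F_{C_1}$ and $f_m \in F_{C_2}$ and argue that the sequence must contain a boundary face, in fact one in $F_{b,C_1}$.

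The key observation is a ``coloring'' argument on faces not in $F_b$. A face $f \notin F_b$ has at most one vertex of $V_c$ in its boundary walk, so the vertices of $f$ that are not in $V_c$ all lie in a single component of $G - V_c$ (the boundary walk between consecutive appearances of that single $V_c$-vertex, or the whole walk if there is none, stays within one component — here is where we use connectivity of the walk and that $V_c$ is a cutset). Assign to each such $f$ the label of that component (and give faces in $F_C$ for component $C$ the label $C$). Now if $f_i, f_{i+1} \notin F_b$ share an edge $e_i = \{v, w\}$ of $G$, then at least one of $v, w$ lies outside $V_c$, say $v$; since $v$ lies in both boundary walks and in the component labeling $f_i$ and the one labeling $f_{i+1}$, those labels agree. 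Hence along any path in $G^\ast$ avoiding $F_b$ entirely, the component label is constant. In particular a path from $F_{C_1}$ (label $C_1$) to $F_{C_2}$ (label $C_2 \ne C_1$) cannot avoid $F_b$.

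It remains to upgrade ``meets $F_b$'' to ``meets $F_{b,C_1}$.'' For this I would walk along the path from the $F_{C_1}$ end and let $f_j$ be the first face on the path that lies in $F_b$; then $f_{j-1} \notin F_b$ and has label $C_1$ (all faces before $f_j$ avoid $F_b$ and the one at position $0$ has label $C_1$), and $f_{j-1}, f_j$ share an edge of $G$ one of whose endpoints, say $v$, is not in $V_c$. That endpoint $v$ lies in the component $C_1$ because it is on the boundary walk of $f_{j-1}$, and $v$ is on the boundary walk of $f_j$, so $f_j$ contains a vertex of $C_1$, i.e.\ $f_j \in F_{b,C_1}$. (If $j = 0$, then $f_0$ itself would be in $F_b$, contradicting $f_0 \in F_C \subseteq F \setminus F_b$; so $j \ge 1$ and the argument applies.) Thus every $F_{C_1}$–$F_{C_2}$ path in $G^\ast$ meets $F_{b,C_1}$, and since $G^\ast$ is connected and $F_{C_1}, F_{C_2}$ are nonempty, $F_{b,C_1}$ is a cutset of $G^\ast$.

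The main obstacle I anticipate is the bookkeeping around repeated vertices in a facial walk: a face not in $F_b$ may still repeat a non-$V_c$ vertex, or contain one $V_c$-vertex several... no — by definition of $F_b$, a face avoiding $F_b$ can contain at most one vertex of $V_c$ counted \emph{with multiplicity at most one}, so in fact a non-$F_b$ face has at most a single occurrence of a single $V_c$-vertex; this is exactly what makes the ``label is well-defined'' step go through, and checking it carefully (using that removing a single vertex from a closed walk leaves the remaining vertices in one component of $G - V_c$) is the only delicate point.
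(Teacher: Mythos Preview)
Your proof is correct and takes essentially the same approach as the paper's: both walk along a dual path from $F_{C_1}$ toward $F_{C_2}$ and argue that the first face leaving $F_{C_1}$ (equivalently, the first face in $F_b$) must lie in $F_{b,C_1}$ because it shares an edge---hence a non-$V_c$ vertex in $C_1$---with its predecessor. The paper compresses your two steps (hit $F_b$, then upgrade to $F_{b,C_1}$) into one by directly considering the first face not in $F_{C_1}$, but the underlying idea is identical.
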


\begin{proof}

Let $f_1\in F_{C_1}^{int}, f_2\in F_{C_2}^{int}$ and $v_1\in f_1, v_1\not\in V_c$ and $v_2\in f_2, v_2\not\in V_c$.
If $G^*-F_b(C_1)$ is connected, then there is a path $f_1=f'_1,f'_2,f'_3,\dots ,f'_n=f_2$ in
$G^*-F_b(C_1)$. Let $f'_i$ be the first face that is not in $F_{C_1}^{int}$. Since it is adjacent to
$f'_{i-1}$ in $G^*-F_b(C_1)$, it shares an edge with $f'_{i-1}$ in $G$, so it shares at least
one vertex from $C_1$ with $f'_{i-1}$.
This means that $f'_i\in F_{C_1}^{int}$ or $f'_i\in F_b(C_1)$
-- both of which are impossible.

\end{proof}

Note that such a cutset $F_b(C_1)$ in $G_b$ can contain vertices from different faces
of $G_b$ if the component is bridging two or more faces.

\begin{corollary}
\label{cr:dualcut}
Let $G=(V,E)$ be a simple embedded graph and $V_c\subset V$ a cutset, so that
$G_b$ as subgraph of
$\bar G_b$ has a simple face $\bar f_1$
whose interior and exterior contain faces of $G$.
Then  $\{f\in F_b| f\in \bar f_1\}$ (note that a face $f\in F_b$ is also a vertex in
$G_b$) is a cutset in the dual graph $G^*$ of size at most $\frac{l}{2}$ if
$l$ is the number of directed edges in $\bar f_1$.

\end{corollary}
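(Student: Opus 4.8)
The plan is to deduce the statement from Lemma~\ref{lem:boundarycut}: I will produce two components $C_1,C_2$ of $G-V_c$ with $F_{C_1},F_{C_2}\neq\emptyset$ that are separated in $G^*$ by the faces lying on the boundary walk of $\bar f_1$, and I will read the size bound off the bipartiteness of $G_b$. Write $D=\{f\in F_b : f\in\bar f_1\}$. The size bound is immediate: the boundary walk of $\bar f_1$ is a closed walk in $G_b$, which is bipartite with parts $V_c$ and $F_b$, so the walk alternates between the two parts; if it consists of $l$ directed edges, then exactly $l/2$ of its vertex occurrences lie in $F_b$, whence $|D|\le l/2$.

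For the cutset property I use the following dichotomy, which is where the hypotheses enter. Since the boundary walk of $\bar f_1$ uses only edges of $G_b$ -- each of which is drawn strictly inside a face of $G$ belonging to $F_b$ -- and only vertices of $V_c\cup F_b$, every face $f$ of $G$ with $f\notin F_b$ lies entirely inside $\bar f_1$ or entirely outside it, whereas for a face $f\in F_b$ some of the regions into which $f$ is cut in $\bar G_b$ by the edges $\{v,f\}$ lie inside $\bar f_1$ exactly when $f\in D$. Because $\bar f_1$ is a \emph{simple} face of $G_b$, one may pass to its internal component, in which the boundary of $\bar f_1$ becomes a simple cycle $Z$; then $Z$ separates the surface into an ``inside'' and an ``outside'', and each component of $G-V_c$ -- being a $G_b$-bridge of $\bar G_b$ -- lies wholly on one of the two sides, since it is connected, its edges (being edges of $G$) cannot cross $Z$ (which consists of edges of $G_b$), and simplicity of $\bar f_1$ prevents such a component from being spread over more than one face of $G_b$. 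Now the hypothesis that both the interior and the exterior of $\bar f_1$ contain faces of $G$ supplies a face $f_1$ inside $\bar f_1$ and a face $f_2$ outside it; since a face of $G$ lying strictly on one side of $Z$ is not a boundary face, we get $f_1\in F_{C_1}$ and $f_2\in F_{C_2}$ with $C_1$ inside and $C_2$ outside $\bar f_1$, so $C_1\neq C_2$ and $F_{C_1},F_{C_2}\neq\emptyset$.

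Lemma~\ref{lem:boundarycut}, applied to $V_c$, $C_1$, $C_2$, now gives that $F_{b,C_1}$ is a cutset of $G^*$; more precisely, the argument in its proof shows that no vertex of $F_{C_1}$ can be joined to a vertex of $F_{C_2}$ by a path of $G^*$ avoiding $F_{b,C_1}$. Finally $F_{b,C_1}\subseteq D$: if $f\in F_{b,C_1}$, then $f\in F_b$ and $f$ is incident with a vertex $u$ of $C_1$; as $C_1$ lies inside $\bar f_1$, the angle of $f$ at $u$, hence one of the regions into which $f$ is cut in $\bar G_b$, lies inside $\bar f_1$, and so $f\in D$ by the dichotomy. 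Since $F_{C_1}$ and $F_{C_2}$ are disjoint from $F_b\supseteq D$, removing $D$ rather than $F_{b,C_1}$ still separates $F_{C_1}$ from $F_{C_2}$ in $G^*$; thus $D$ is a cutset of $G^*$, of size at most $l/2$ by the first paragraph.

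The main obstacle is making the inside/outside dichotomy rigorous -- that is, tracking precisely which faces of $G$ and which components of $G-V_c$ lie on which side of the simple $G_b$-face $\bar f_1$, and extracting from ``the interior and exterior contain faces of $G$'' two \emph{distinct} components with nonempty face sets $F_{C_1}$ and $F_{C_2}$. This is the only place where simplicity of $\bar f_1$ is essential: it is what permits passing to the internal component and hence speaking of a genuine inside and outside, and it is what confines a single component of $G-V_c$, and therefore the dual cut $F_{b,C_1}$, to one face of $G_b$.
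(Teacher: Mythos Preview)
Your proof is correct and follows precisely the route the paper intends: the corollary is stated without proof, as an immediate consequence of Lemma~\ref{lem:boundarycut}, and you have supplied exactly that deduction together with the obvious size bound from the bipartiteness of $G_b$. The only substantive work is the inside/outside dichotomy for simple faces and the inclusion $F_{b,C_1}\subseteq D$, both of which you handle correctly; in particular, your observation that a face of $G$ lying strictly inside $\bar f_1$ cannot belong to $F_b$ (since members of $F_b$ are vertices of $G_b$ and hence sit on boundaries of $G_b$-faces) is the right way to extract the components $C_1\neq C_2$ with $F_{C_1},F_{C_2}\neq\emptyset$.
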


\begin{lemma}\label{lem:atleast5}

  Let $G$ be a simple embedded graph with a $1$-cut $\{v_c\}$ and a simple dual.
  If $G_b$ has a vertex $f_0\in F_b$ with  (as face of $G$) an internal component $C$ and $F_C^{int}=\emptyset$, then
  $|F_b|\ge 5$. Note that, for a $1$-cut $\{v_c\}$, $G_b$ is connected.

\end{lemma}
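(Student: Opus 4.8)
The plan is to prove the stronger statement $|F_{b,C}|\ge 5$, which suffices since $F_{b,C}\subseteq F_b$ by definition. Write $H$ for the subgraph of $G$ induced by $V(C)\cup\{v_c\}$, let $d:=\deg_H(v_c)$ --- so, $G$ being simple, $d$ is the number of neighbours of $v_c$ lying in $C$ --- and put $t:=|F_{b,C}|$. I shall establish four facts and then conclude by a short arithmetic. Note that the hypothesis enters only to furnish a component $C$ of $G-v_c$ with $F_C=\emptyset$; that its $G_b$-bridge lies inside the face $f_0$ is not used in the count, and since no bound on the genus is needed this is presumably the intended argument.

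\emph{Fact 1: the faces of $G$ meeting $V(C)$ are exactly the $t$ members of $F_{b,C}$.} Members of $F_{b,C}$ are such faces by definition; conversely, a face meeting $V(C)$ whose facial walk contains $v_c$ at most once would, upon deleting that single occurrence, have its walk confined to one component of $G-v_c$, necessarily $C$, so its whole facial walk lies in $V(C)\cup\{v_c\}$ and the face belongs to $F_C$, contrary to $F_C=\emptyset$. \emph{Fact 2: $t\le d$.} By Fact~1 every face meeting $V(C)$ has $v_c$ on its facial walk, and since $C$ is a component of $G-v_c$ the walk must then contain a directed edge $(v_c,x)$ with $x\in V(C)$; there are exactly $d$ such directed edges in $G$, each lying on a single facial walk, so at most $d$ faces meet $V(C)$.

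\emph{Fact 3: $e(H)\ge 2d-1$.} The edges of $H$ are the $d$ edges joining $v_c$ to $C$ together with the edges of $G$ inside $C$; the latter number at least $|V(C)|-1$ since $C$ is connected, while $|V(C)|\ge d$ since $G$ is simple, so $e(H)\ge d+(|V(C)|-1)\ge 2d-1$. \emph{Fact 4: $e(H)\le\binom{t}{2}$.} Since $G^\ast$ is simple it has no loops and no parallel edges, i.e.\ no edge of $G$ has the same face on both sides and no two faces of $G$ share more than one edge; moreover both faces incident to any edge of $H$ meet $V(C)$, hence lie in $F_{b,C}$ by Fact~1. Therefore $e\mapsto e^\ast$ maps $E(H)$ injectively to a set of edges of $G^\ast$ that forms a simple graph with vertex set contained in $F_{b,C}$, so $e(H)\le\binom{t}{2}$. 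Now Facts~3 and~4 together with $d\ge t$ (Fact~2) give $2t-1\le 2d-1\le e(H)\le\binom{t}{2}=\tfrac12 t(t-1)$, hence $t^2-5t+2\ge 0$; since $t^2-5t+2<0$ for every $t\in\{1,2,3,4\}$, we conclude $t\ge 5$, and therefore $|F_b|\ge|F_{b,C}|=t\ge 5$.

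The step requiring the most care is Fact~4: one must verify that the duals of the edges of $H$ are pairwise distinct, loop-free edges of $G^\ast$ all of whose endpoints lie in $F_{b,C}$ --- this is exactly where simplicity of $G$ and of $G^\ast$ are both used, in conjunction with the identification in Fact~1. The remaining steps are elementary counting; in particular the extreme cases (e.g.\ $C$ a single vertex with $d=1$) need no separate treatment, since already $e(H)\ge 1$ contradicts $e(H)\le\binom{1}{2}=0$.
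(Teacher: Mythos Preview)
Your proof is correct and takes a genuinely different route from the paper's.

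The paper argues constructively: it picks a neighbour $x_1$ of $v_c$ in $C$, follows the facial walk of the face containing $(v_c,x_1)$ until it returns to $v_c$, obtaining a segment $v_c=x_0,x_1,\dots,x_i=v_c$ with $i\ge 3$ (simplicity of $G$), and observes that the faces $f_0,\dots,f_i$ on either side of the edges of this path are pairwise distinct (simplicity of $G^\ast$) and all lie in $F_b$ (since $F_C=\emptyset$). This already gives $i+1\ge 4$ such faces; a short extension of the same idea handles the residual case $i=3$ and in fact yields $|F_b|\ge 6$ there. Your argument replaces this walk-chasing by a global double count: every edge of $H=G[V(C)\cup\{v_c\}]$ dualizes to a loop-free edge of the simple graph $G^\ast$ with both ends in $F_{b,C}$, so $e(H)\le\binom{t}{2}$; combined with $e(H)\ge 2d-1\ge 2t-1$ this forces $t\ge 5$.

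What each approach buys: the paper's argument is local and constructive---it exhibits the five faces explicitly and even extracts the slightly sharper $|F_b|\ge 6$ in the extremal subcase. Your counting argument is cleaner in that it avoids any case distinction and isolates exactly where each hypothesis is used (simplicity of $G$ for $|V(C)|\ge d$, simplicity of $G^\ast$ for the $\binom{t}{2}$ bound, $F_C=\emptyset$ for Fact~1). Your remark that the face $f_0$ of $G_b$ plays no role beyond supplying a component $C$ with $F_C=\emptyset$ is accurate and applies equally to the paper's proof.
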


\begin{proof}

  Let $C$ be such a component and $x_1$ a neighbour of $v_c=x_0$ in
  $C$.  Let $(x_0,x_1),$ $(x_1,x_2),\dots,$ $(x_{i-1},x_i)$ be a maximal
  path in the face $f_0\in F_b$ containing $(x_0,x_1)$, so that
  $ v_c \not\in \{x_1,\dots,x_{i-1}\}$. We have $x_i=v_c$, but the path cannot be the whole face, because in that case, the face would contain $v_c$
  only once and would therefore be in $F_C^{int}$. As $G$ has no double
  edges, we have $i\ge 3$.  For $0<j\le i$, we denote the face
  containing $(x_j,x_{j-1})$ by $f_j$, so the faces $f_0,f_1,\dots
  ,f_i$ are pairwise different, since the dual has no double edges or
  loops.  See the left hand side of Figure~\ref{fig:paths} for an
  illustration.

\begin{figure}[h!t]
	\centering
	\includegraphics[width=0.6\textwidth]{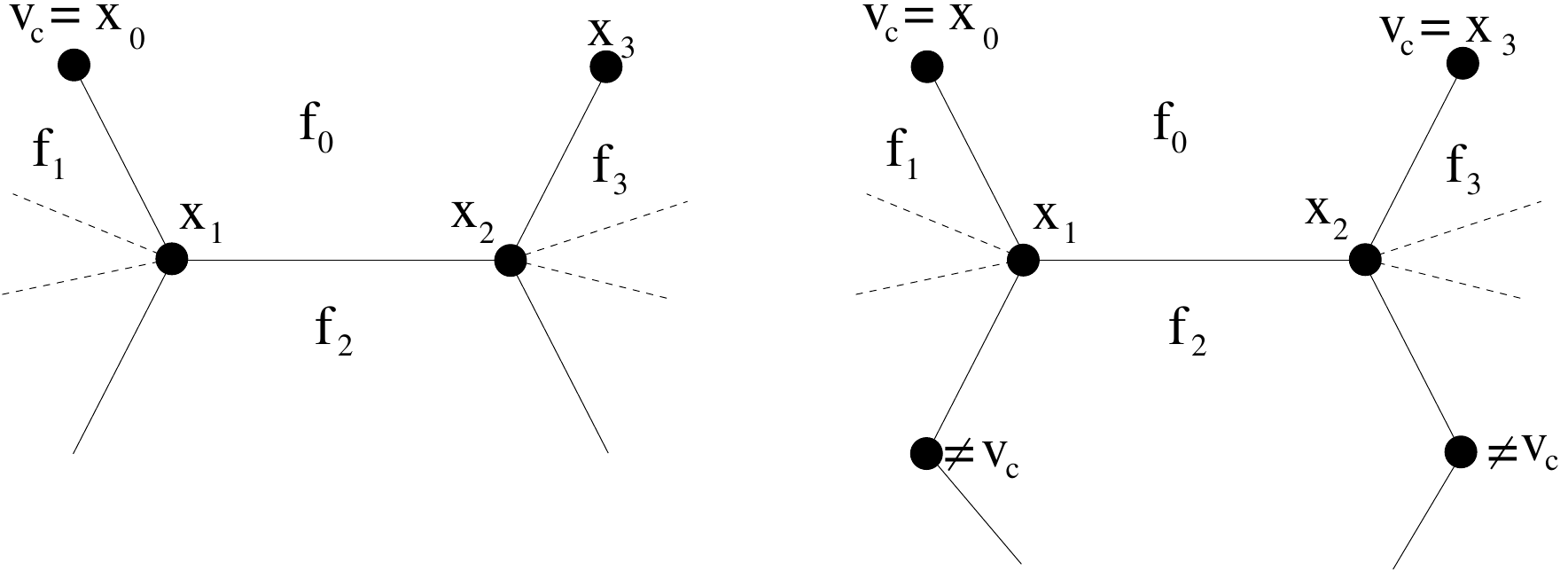}
	\caption{Paths in faces of $F_b$.}
	\label{fig:paths}
\end{figure}

Because all faces $f_0,f_1,\dots ,f_i$ contain vertices of $C$,
they are either in $F_C^{int}$ or in $F_b$,
and because $F_C^{int}=\emptyset $, they are in $F_b$. If $i\ge4$,
we have that $|F_b|\ge 5$, so assume
that $i=3$. Then (see the right part of Figure~\ref{fig:paths}) $(x_2,x_1)$ is contained in
a facial path starting and ending with $v_c$ showing that,
in that case, even $|F_b|\ge 6$.

\end{proof}

\begin{lemma}\label{lem:2face}

  Let $G$ be a simple embedded graph with a simple dual and a given cutset $V_c$.
  If $G_b$ has a face $f$ of size $2$, then (as subgraph of $\bar G_b$) there is exactly one component $C$ inside $f$
  and (no matter whether $f$ is bridged or not) $F_C^{int}\not= \emptyset$.

\end{lemma}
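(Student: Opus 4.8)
The plan is to first pin down the combinatorial shape of a size-$2$ face $f$ of $G_b$, and then to read both assertions off from a few faces of $G$ sitting next to $f$. Since $G_b$ is bipartite with parts $V_c$ and $F_b$, a face of size $2$ is a closed walk $(v,\varphi),(\varphi,v)$ with $v\in V_c$ and $\varphi\in F_b$ (here $\varphi$ is also a face of $G$). It cannot run twice along one spoke, for that would make $\varphi$ have degree $1$ in $G_b$, whereas $\varphi\in F_b$ forces at least two occurrences of vertices of $V_c$ on the facial walk of $\varphi$, i.e.\ $\deg_{G_b}(\varphi)\ge 2$. So $f$ is a digon bounded by two distinct parallel spokes $s_1,s_2$ joining $v$ to $\varphi$, coming from two occurrences of $v$ on the facial walk of $\varphi$; between these two occurrences, on the side bounding $f$, the facial walk of $\varphi$ runs along a sub-walk $P=v\,x_1\,x_2\cdots x_{\ell-1}\,v$ of $\partial\varphi$. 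That $f$ is a face of $G_b$ says precisely that (i) none of $x_1,\dots,x_{\ell-1}$ lies in $V_c$ (so $s_1,s_2$ are consecutive spokes around $\varphi$), and (ii) no spoke at $v$ occurs strictly between $s_1$ and $s_2$ on the side of $f$ (so $s_1,s_2$ are consecutive spokes around $v$). By (i) the walk $P$ lies in $C\cup\{v\}$, where $C$ denotes the component of $G-V_c$ containing $x_1$.

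Next I would use that $G^*$ is simple. If $\ell=2$, or if $\ell\ge3$ but $x_1=x_{\ell-1}$, then the first and last edge of $P$ coincide with one edge $\{v,x_1\}$ of $G$ that the facial walk of $\varphi$ traverses in both directions; then that edge has $\varphi$ on both sides, a loop of $G^*$, which is excluded. Hence $\ell\ge3$ and $x_1\neq x_{\ell-1}$, so the edges $(v,x_1)$ and $(v,x_{\ell-1})$ of $G$ are distinct and both occur strictly between $s_1$ and $s_2$ in the rotation around $v$ on the side of $f$ -- the first being the edge of $P$ leaving the first occurrence of $v$, the second being the edge of $P$ entering the second. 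List the edges of $G$ met around $v$ between $s_1$ and $s_2$ on that side, in rotational order, as $(v,u_1),(v,u_2),\dots,(v,u_r)$ with $u_1=x_1$, $u_r=x_{\ell-1}$ and $r\ge2$, and let $\psi_i$ be the face of $G$ at the angle between $(v,u_i)$ and $(v,u_{i+1})$, for $1\le i\le r-1$.

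Each $\psi_i$ sits at a full angle of $v$ strictly between $s_1$ and $s_2$ on the side of $f$, so by (ii) it carries no spoke and therefore lies outside $F_b$; since $v\in V_c$ lies on $\partial\psi_i$, this forces $v$ to be the \emph{only} occurrence of a $V_c$-vertex on $\partial\psi_i$. Consequently no $u_i$ lies in $V_c$: this holds for $u_1=x_1$ and $u_r=x_{\ell-1}$ by (i), and if $u_i\in V_c$ for some $1<i<r$ then both faces incident with $\{v,u_i\}$ -- which are $\psi_{i-1}$ and $\psi_i$ -- would contain two $V_c$-vertices and hence lie in $F_b$, a contradiction. Also $\partial\psi_i$ with the vertex $v$ removed is a walk in $G-V_c$ joining $u_i$ to $u_{i+1}$; since $u_1=x_1\in C$, induction on $i$ gives $u_i\in C$ and $V(\psi_i)\subseteq C\cup\{v\}$ for all $i$. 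Therefore every edge of $G$ in the angle of $f$ at $v$ is an edge of the bridge of $C$, while the angle of $f$ at $\varphi$ contains no edge of $G$ at all (the only edges at the dual vertex $\varphi$ are spokes); so $C$ is the unique component inside $f$. Finally $\psi_1\notin F_b$ and $V(\psi_1)\subseteq C\cup V_c$, hence $\psi_1\in F_C$ and $F_C\neq\emptyset$. Nothing above used whether $f$ is bridged, so the statement follows in full.

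The step I expect to be the main obstacle is the ``side of $f$'' bookkeeping -- identifying which of the two regions cut off by $s_1$ and $s_2$ inside the disc $\varphi$ is $f$, and checking that the part of $\partial\varphi$ on its boundary is exactly $P$ -- together with the use of the simple-dual hypothesis to guarantee that the angle of $f$ at $v$ really contains a face $\psi_1$ (that is, $r\ge2$). The hypothesis is genuinely needed: a pendant vertex placed in the face $\varphi$ gives a size-$2$ face of $G_b$ with $F_C=\emptyset$, but then $G^*$ has a loop.
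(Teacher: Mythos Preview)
Your proof is correct and follows essentially the same approach as the paper's: both arguments list the edges of $G$ at the cut vertex between the two spokes bounding the digon, use simplicity of the dual to force at least two such edges, observe that the faces of $G$ sitting at the intermediate angles are not boundary faces and hence lie in $F_C$, and use their boundary walks (minus $v$) to chain all neighbours into a single component. Your write-up is considerably more explicit about the setup (the two distinct spokes, the sub-walk $P$ of $\partial\varphi$, the identification $u_1=x_1$, $u_r=x_{\ell-1}$), whereas the paper compresses this into one sentence, but the underlying idea is identical.
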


\begin{proof}

  Let $f_b,v_c$ be the vertices forming the $2$-face $f$ in $G_b$ and
  let the order around $v_c$ (in $\bar G_b$) inside $f$
be $(v_c,f_b),(v_c,x_1),\dots ,(v_c,x_k),(v_c,f_b)$.
Then $k\ge 2$ because otherwise the face $f_b$ in $G$ would imply a loop in the dual.
Furthermore,
  for $1\le i <k$, the edges $(x_i,v_c)$ and $(v_c,x_{i+1})$
  belong to the same face $f_i$. This face  $f_i$
  contains $v_c$ as the only element of $V_c$ and only at one position, because otherwise we would
  not have a $2$-face in $G_b$. This means that $f_i\in F_C^{int}$ and that
  the face boundary without $v_c$ connects $x_i$ and $x_{i+1}$
  so that they
  belong to the same component. Since each component inside $f$ must contain at least one of
  these vertices, there is only one component.

\end{proof}

\begin{lemma}\label{lem:classes}

  Let $G$ be a simple embedded graph with a simple dual and a given cutset $V_c$ of $G$.
  Each cycle $Z$ in a component $C$ of $G-V_c$ is either
  \begin{description}
  \item[(a)] in a simple face $f$ of $G_b$ so that a face of the internal component is also a face of $G$, or
  \item[(b)] in a bridged face of $G_b$, or
  \item[(c)] in a simple internally non-plane face of $G_b$.

  \end{description}

\end{lemma}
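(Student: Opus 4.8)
The plan is to pin the cycle $Z$ down inside a single face of $G_b$ and then split into the three listed cases according to the type of that face.

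First I would note that $Z$ is disjoint from $G_b$ as a subset of the surface carrying $\bar G_b$: the vertices of $Z$ lie in the component $C$, so they avoid $V_c$ and are not vertices of $F_b$, while the edges of $Z$ are edges of $G$ and hence not edges of $G_b$. Since $Z$ is connected, it therefore lies inside a single face $f$ of $G_b$, and being part of the corresponding $G_b$-bridge inside $f$ it is a cycle of the internal structure of $f$. If $f$ is bridged we are in case (b), and if $f$ is simple and internally non-plane we are in case (c). So the only work is to show that a simple internally plane face $f$ containing $Z$ is of type (a).

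In that situation the internal component $D_f$ of $f$ is, by definition, a genus-$0$ graph, cellularly embedded in the sphere, whose designated boundary cycle $\partial f$ (the unrolled facial walk of $f$) is a simple cycle built from edges of $G_b$ together with the split copies of the vertices of $V_c$ and of the faces in $F_b$. Since $Z$ consists of edges of $G$ and of vertices of $C$, it is vertex- and edge-disjoint from $\partial f$, so $Z$ is a cycle of $D_f$ different from $\partial f$. By the Jordan curve theorem $Z$ separates the sphere of $D_f$ into two open regions; the face $f^{\circ}$ of $D_f$ bounded by $\partial f$ is disjoint from $Z$ and hence lies entirely in one of them, and I let $R$ be the other. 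As $R$ is a non-empty open set it contains a face $\rho$ of $D_f$: if $D_f$ meets $R$, then $R\setminus D_f$ is a non-empty union of faces each contained in $R$, and otherwise $R=\rho$ is the face bounded by $Z$ alone. Since $\rho\subseteq R$ and $f^{\circ}\cap R=\emptyset$, we have $\rho\neq f^{\circ}$.

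Finally I would argue that $\rho$ is a face of $G$. Every face of $D_f$ other than $f^{\circ}$ is a region lying inside $f$ whose boundary consists only of edges of the bridges inside $f$, and each such bridge is made up of vertices of $C$, vertices of $V_c$, and edges of $G$; hence $\partial\rho$ uses only edges of $G$, making $\rho$ a face of $\bar G_b$ whose boundary walk lies entirely in $G$. Such a face is a face of $G$: $\rho$ sits inside some face $F$ of $G$, and since $\partial\rho\subseteq G$ while $F$ is disjoint from $G$, no path inside $F$ can leave $\overline{\rho}$, so $F=\rho$. Thus $\rho$ is simultaneously a face of $G$ and a face of the internal component of $f$, which is exactly case (a). The delicate point here is the bookkeeping about the internal component — that, because $f$ is simple, its faces other than $f^{\circ}$ really are faces of $\bar G_b$ bounded only by edges of $G$, and that a single cycle inside a planar internal component always separates $f^{\circ}$ from some other face; it is precisely the planarity available in case (a) that makes the Jordan-curve step work, whereas in the non-plane situation $Z$ need not separate, which is why that possibility falls under (c) and needs no further argument.
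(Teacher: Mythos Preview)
Your argument is essentially correct and follows the same overall plan as the paper: locate $Z$ inside a single face $f$ of $G_b$, dispose of the bridged and simple non-plane cases immediately, and in the simple internally plane case extract a face of $G$ trapped by $Z$. The paper organises the last step slightly differently --- it starts from $\partial f$, $Z$, and a connecting path embedded in the plane and then adds the remaining edges one at a time, concluding (c) if two faces ever merge and (a) otherwise --- but your direct appeal to the Jordan curve theorem on the sphere carrying $D_f$ achieves the same thing and is arguably cleaner.

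One statement needs tightening. You assert that \emph{every} face of $D_f$ other than $f^{\circ}$ has boundary consisting only of bridge edges. That is not true: the inner faces of $D_f$ that are incident with $\partial f$ carry edges of $\partial f$ (which are $G_b$-edges, not $G$-edges) on their boundary. What saves your conclusion about $\rho$ is not this general claim but the specific fact you already established: $\rho\subseteq R$, and $\partial f$ --- being connected, disjoint from $Z$, and touching $f^{\circ}$ --- lies entirely in the other Jordan region $R'$, so $\partial\rho\subseteq\overline{R}$ cannot meet $\partial f$. Hence $\partial\rho$ lies in $D_f\setminus\partial f$, which is the bridge, and therefore in $G$. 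If you replace the over-broad sentence by this observation, the rest of your paragraph (identifying $\rho$ with a face of $\bar G_b$ and then of $G$) goes through as written.
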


\begin{proof}

If $C$ is connected to more than one face, we have (b), so assume that
$C$ is inside a simple face $f$. Embedding $f$ (with vertices occuring more than
once replaced by copies), $Z$ and a path from $f$ to $Z$, we have a plane graph
with (a directed version of) $Z$ forming a face $f_0$ inside $f$. If $f_0$ is a
face of $G$, we are in case (a). Otherwise we can recursively
argue, that if we add the remaining edges to form the internal component containing $C$ in the cyclic
order given by $G$, we either connect two faces in one of the steps (which means
that we have case (c)) or we will just subdivide $f_0$ producing faces inside of it,
which means that we end up in case (a).

\end{proof}

\begin{corollary}\label{cor:classes}

  Let $G$ be a simple embedded graph with a simple dual and a $1$-cut $\{v_c\}$.
  Then each $G_b$-bridge $B$ of $\bar G_b$ is
  \begin{description}
  \item[(a)] in a simple face of $G_b$ containing an interior face of $G$, or
  \item[(b)] in a bridged face, or
  \item[(c)] in a simple face with nonplane interior.
  \end{description}

\end{corollary}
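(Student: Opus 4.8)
The plan is to deduce this from Lemma~\ref{lem:classes} after describing the $G_b$-bridges of $\bar G_b$ explicitly. Because $G$ is simple and $V_c=\{v_c\}$ is a single vertex, $\bar G_b$ has no single-edge $G_b$-bridge: such an edge would be an edge of $G$ with both ends in $V(G_b)=\{v_c\}\cup F_b$, and since the members of $F_b$ are faces rather than vertices of $G$, it would be a loop at $v_c$ --- impossible. So every $G_b$-bridge $B$ of $\bar G_b$ is a component $C$ of $\bar G_b-V(G_b)=G-v_c$ together with the edges of $G$ joining $C$ to $v_c$. Since $G^*$ is loopless, $G$ has no cut-edge; hence $C$ has at least two vertices, and if $C$ is a tree then every leaf of $C$ is adjacent to $v_c$, so $C\cup\{v_c\}$ contains a cycle.

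First I would dispose of case~(b): if $B$ lies inside a bridged face of $G_b$ --- in particular if $B$ lies inside more than one face --- we are done. So assume $B$ lies inside a unique face $f$ of $G_b$ with $f$ simple; then $B$ is contained in the internal component of $f$. If that internal component is non-plane, then $f$ is a simple internally non-plane face and we are in case~(c). It remains to treat the case that $f$ is simple with plane internal component, where the goal is to exhibit a face of the internal component of $f$ that is also a face of $G$, i.e.\ case~(a). If $C$ contains a cycle $Z$, this is immediate: applying Lemma~\ref{lem:classes} to $Z$, and using that $Z\subseteq C$ lies inside the single simple face $f$ and hence in no bridged face, only alternatives~(a) and (c) of Lemma~\ref{lem:classes} survive for $Z$; since $f$ is internally plane, alternative~(c) is excluded, so alternative~(a) holds, which is precisely case~(a) here.

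The hard part will be the remaining case, in which $C$ is a tree and $f$ is simple with plane internal component --- the one configuration not reached by Lemma~\ref{lem:classes}. Here I would argue by a direct analysis of the internal component of $f$, which is the boundary cycle of $f$ with the connected graph $B$ attached at copies of $v_c$. The key point I would aim to establish is that an angle of $v_c$ whose incident face lies in some $F_{C'}$ carries no edge of $G_b$; this should force the edges of $B$ at $v_c$ that ``close off'' the leaves of $C$ to attach to one and the same copy of $v_c$ in the internal component of $f$, creating there a cycle made entirely of edges of $G$. An innermost such cycle bounds a face of the internal component whose boundary uses only edges of $G$ and lies inside $C\cup\{v_c\}$, hence a face of $G$ in $F_C$, yielding case~(a); and if no such cycle arises --- which is exactly the situation $F_C=\emptyset$ of Lemma~\ref{lem:atleast5} --- I expect the internal component to be forced non-plane, landing in case~(c).
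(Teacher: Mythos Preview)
Your reduction to Lemma~\ref{lem:classes} is the right idea, and your observation that single-edge bridges would be loops is exactly what the paper does. The gap is in how you handle the possibility that the component $C$ is a tree. You use only that $G^*$ is loopless (no cut-edges in $G$, hence minimum degree $\ge 2$), which leaves the tree case open and sends you into a speculative analysis that you yourself flag as incomplete (``should force'', ``I expect'').

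The paper's proof avoids this entirely with one further observation: since $G^*$ is \emph{simple}, $G$ has minimum degree at least~$3$. Indeed, a vertex of degree~$2$ in $G$ is incident with two edges that are shared by the two faces on either side, producing a double edge (or loop) in the dual. Consequently every vertex $u\ne v_c$ has at least two neighbours in its component $C$ of $G-v_c$, so $C$ has minimum degree $\ge 2$ and therefore contains a cycle. Lemma~\ref{lem:classes} then applies directly to every bridge, and the ``hard part'' you describe is vacuous.

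So the missing ingredient is precisely the step from ``$G^*$ loopless'' to ``$G^*$ simple''; once you use the absence of double edges in the dual, the proof collapses to two lines.
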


\begin{proof}

  Because $\{v_c\}$ is a $1$-cut, there are no bridges of $G_b$ that
are just edges, since they would be loops. Furthermore the minimum degree in $G$ is $3$, because otherwise the dual would
have a double edge, which implies that each vertex has at least two neighbours that are in the same component of
$G-\{v_c\}$. Together this implies that each bridge of $G_b$ contains a cycle, so that the result follows with
Lemma~\ref{lem:classes}.

\end{proof}

\begin{lemma}\label{lem:in different faces}

Let $G$ be a simple embedded graph with a given cutset $V_c$.
If $C_1,C_2$ are different $G_b$-bridges of $\bar G_b$, then they are
in different faces of $G_b$.

\end{lemma}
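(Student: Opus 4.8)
The plan is to work inside the boundary-augmented graph $\bar G_b$ and to exploit the fact that $G_b$ shares no edge with $G$. The $G_b$-bridges of $\bar G_b$ are exactly the components of $G-V_c$ together with their edges to $V_c$, plus the single edges of $G$ joining two vertices of $V_c$; in particular every edge of $G$ lies in some $G_b$-bridge, so deleting all $G_b$-bridges from $\bar G_b$ returns precisely $G_b$. Consequently each face $f$ of $G_b$ is obtained by merging faces of $\bar G_b$: it is the union of one class of the equivalence relation on the faces of $\bar G_b$ generated by ``two faces share an edge of $G$''. I would then show that every such class carries vertices of at most one component of $G-V_c$, which suffices: if two distinct $G_b$-bridges were inside one face $f$ of $G_b$, then the faces of $\bar G_b$ incident with the deleted bridge-edges would lie in $f$'s class and would witness two components (or a component together with a pair of $V_c$-vertices) at once.

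The first key step is a description of the faces of $\bar G_b$. Since $\bar G_b$ is obtained from $G$ by inserting, into each face $\phi\in F_b$, a star with centre $\phi$ joined to each occurrence of a $V_c$-vertex on the facial walk of $\phi$, every face of $\bar G_b$ is either a face of $G$ not in $F_b$, or a sector of some $\phi\in F_b$ bounded by two consecutive (around $\phi$) star edges together with the portion of the facial walk of $\phi$ between the corresponding $V_c$-occurrences. In the first case the facial walk is a single closed walk meeting $V_c$ at most once, so all its remaining vertices lie in one component of $G-V_c$; in the second case the portion of the facial walk between two cyclically consecutive $V_c$-occurrences is a single path, whose interior vertices again lie in one component, or that portion is a single edge joining two $V_c$-vertices and the sector meets no component at all. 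Hence every face of $\bar G_b$ carries vertices of at most one component of $G-V_c$.

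The second step is propagation across shared $G$-edges: if two faces of $\bar G_b$ share an edge $e$ of $G$ and both meet a component, then $e$ has an endpoint outside $V_c$, and that endpoint's component is the unique component met by either face, so the two coincide; if instead both endpoints of $e$ lie in $V_c$, then neither of the two faces containing $e$ meets any component -- each is a triangular sector $v\,\phi\,v'$ -- and such a sector is adjacent across its unique $G$-edge only to another triangular sector, so its whole class has at most two members and meets no component. Combining the two steps, every equivalence class, hence every face $f$ of $G_b$, meets at most one component $K$ of $G-V_c$; therefore at most one component-bridge can be inside $f$, and if a single-edge bridge is inside $f$ then $f$ is a quadrilateral $v\,\phi\,v'\,\phi'$ of $G_b$ whose interior contains only that edge. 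In either case two distinct $G_b$-bridges cannot both be inside $f$.

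The step I expect to demand the most care is matching the rotation-system definition of ``a bridge is inside a face $f$'' with the merging picture: one has to verify that if an edge $e$ of a bridge appears in a corner of $f$, then the two faces of $\bar G_b$ incident with $e$ really do lie in the equivalence class that merges to $f$, so that the component they carry is genuinely ``seen'' by $f$; and one has to be careful with the degenerate single-edge-bridge configurations, where $e$ might be a cut edge of $G$ bordering the same face on both sides. Modulo this bookkeeping the argument is purely structural and uses no genus estimate.
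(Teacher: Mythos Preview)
Your argument is correct and follows a genuinely different route from the paper. The paper works directly with angles: it shows that (i) two $V_c$-vertices consecutive on the boundary walk of a face $f$ of $G_b$, with one $F_b$-vertex $f_b$ between them, are joined inside $f$ by a $V_c$-free path along the boundary of the $G$-face $f_b$ and hence lie in a common bridge; and (ii) at any single angle of $f$ at a cut-vertex $v_c$, all bridge-edges belong to one bridge, since two consecutive such edges are joined through the non-boundary $G$-face at that sub-angle. Chaining (i) and (ii) around $f$ yields a single bridge, and single-edge bridges are absorbed automatically into this chain. You instead classify the faces of $\bar G_b$ (non-boundary $G$-faces and sectors of boundary faces), show each meets at most one component of $G-V_c$, and propagate this along shared $G$-edges; these are really the same two local observations---your ``sector meets one component'' is the paper's path in (i), your ``non-boundary face meets one component'' is the face used in (ii)---recast as a global equivalence-class picture, at the price of a separate triangular-sector analysis for single-edge bridges that the paper gets for free.

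One small inaccuracy worth flagging: faces of $G_b$ need not biject with your equivalence classes. When a bridge lies inside two distinct $G_b$-faces $f,f'$ (the ``bridged'' situation the paper explicitly allows), the $\bar G_b$-faces interior to that bridge link the angle-faces of $f$ and of $f'$ into a single class, so one class may underlie several $G_b$-faces. Fortunately your argument only needs the one-sided statement that the $\bar G_b$-faces appearing at the angles of a given $f$ all lie in \emph{one} class, and that does hold by exactly the mechanism you describe (same $\bar G_b$-face across each $G_b$-edge of $f$, shared $G$-edge within each angle); so your conclusion survives unchanged once the wording is adjusted.
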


\begin{proof}

Let $f$ be a face of $G_b$. If $v_c, v'_c\in V_c$ are two vertices
following each other in the cyclic order around $f$ with one
vertex $f_b$ representing a face of $G$ in between, then there
is a path in $f_b$ and inside $f$ connecting
$v_c$ and $v'_c$ without vertices of $V_c$ in between. This path is part of a bridge,
so each two vertices of $V_c$ following each other in the cyclic order around $f$
are contained in a common bridge.
If we can show
that there are no two different bridges inside $f$ sharing a
vertex at the same angle in $f$,
this implies that there are no two such different bridges
at vertices following each other in the cyclic order and finally
that there is only one bridge.

Now let $v_c\in V_c$ be a vertex of a face $f$ of $G_b$ and $C_1,C_2$
be two different bridges sharing $v_c$ at the same position of the face $f$.
Without loss of generality assume that
if $(f_{b,1},v_c),(v_c,f_{b,2})$ are the edges of $f$ at that position,
that the first edge
in the rotation  from $(v_c,f_{b,1})$ to $(v_c,f_{b,2})$ around
$v_c$ belongs to $C_1$ and that the first edge $(v_c,c_2)$ that does not belong
to $C_1$ belongs to $C_2$. Let $(v_c,c_1)$ be the previous edge of $(v_c,c_2)$, so $c_1\in C_1$.
As there is no edge to a face in $F_b$ between $(v_c,c_1)$ and $(v_c,c_2)$
in $\bar G_b$, a vertex of $V_c$ occurs only once in
the face containing $(c_1,v_c), (v_c,c_2)$, so the path connecting $c_1$ and
$c_2$ along the side not containing $v_c$ shows that they belong to the same bridge
-- a contradiction.

\end{proof}

The following lemma will be used to show the existence of small faces in the boundary multigraph $G_b$
of a graph $G$ with a given 1-cut. Together with Lemma~\ref{lem:boundarycut} these small faces will
then imply a small cut in the dual.

\begin{lemma}\label{lem:smallfaces}

 Let $G=(V_1\cup V_2,E)$ be an embedded bipartite multigraph of genus $g$ with bipartition classes
$V_1,V_2$, so that $|V_1|=1$ and that each vertex in $V_2$ has degree at least $2$.
If, for some $k$,
we have that $i$ faces have size less than $2k$, then

\begin{center}
  $i\ge \frac{k}{k-1} -\frac{2k}{k-1}g + \frac{k-2}{k-1}|V_2|.$
 \end{center}

If additionally there is at most one face of size $2$, then

\begin{center}
$i\ge \frac{k-1}{k-2} -\frac{2k}{k-2}g + |V_2|.$
\end{center}

\end{lemma}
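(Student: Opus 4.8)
The plan is to combine Euler's formula with a double count of edge–face incidences, bringing in the two structural hypotheses ($|V_1|=1$ and $\deg(w)\ge 2$ for every $w\in V_2$) only at the last step. Write $e$ and $f$ for the number of edges and faces of $G$. Since $G$ is bipartite, every facial walk has even length, so ``size less than $2k$'' means ``size at most $2k-2$''; let $i$ be the number of such faces and $j=f-i$ the number of faces of size at least $2k$. Because $|V_1|=1$, the graph has exactly $1+|V_2|$ vertices, so Euler's formula $v(G)-e(G)+f(G)=2-2g$ gives $f=e-|V_2|+1-2g$, and therefore $kf-e=(k-1)e-k|V_2|+k-2kg$; I will use this identity in both halves of the proof.

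For the first bound I would estimate the sum of face sizes from below. Each of the $i$ small faces has size $\ge 2$ and each of the $j$ remaining faces has size $\ge 2k$, so
\[ 2e\ \ge\ 2i+2kj\ =\ 2kf-2(k-1)i, \]
which, dividing by the positive quantity $2(k-1)$ (so $k\ge 2$ is implicit), rearranges to $i\ge\frac{kf-e}{k-1}$. Substituting the identity for $kf-e$ and then using $e=\sum_{w\in V_2}\deg(w)\ge 2|V_2|$ to replace $(k-1)e$ by $2(k-1)|V_2|$ (legitimate since $k-1>0$) yields
\[ i\ \ge\ \frac{2(k-1)|V_2|-k|V_2|+k-2kg}{k-1}\ =\ \frac{k}{k-1}-\frac{2k}{k-1}g+\frac{k-2}{k-1}|V_2|, \]
which is exactly the first inequality.

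For the sharpened bound I would refine the face-size estimate using the extra hypothesis: if at most one face has size $2$, then among the small faces at most one contributes $2$ and each of the others contributes $\ge 4$, so the small faces together contribute at least $4i-2$ (this also holds trivially when $i=0$, since then the left side is $0$). Hence $2e\ge(4i-2)+2kj=2kf-2(k-2)i-2$, which for $k\ge 3$ rearranges to $i\ge\frac{kf-e-1}{k-2}$. Plugging in $kf-e=(k-1)e-k|V_2|+k-2kg$ and again bounding $(k-1)e\ge 2(k-1)|V_2|$ gives
\[ i\ \ge\ \frac{2(k-1)|V_2|-k|V_2|+k-1-2kg}{k-2}\ =\ \frac{k-1}{k-2}-\frac{2k}{k-2}g+|V_2|, \]
as claimed.

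The argument is essentially bookkeeping, so I do not expect a genuine obstacle; the points that need care are merely (i) checking that the divisors $k-1$ and $k-2$ are positive, so the two statements are implicitly for $k\ge 2$ and $k\ge 3$ respectively; (ii) verifying that the refined bound ``small faces contribute $\ge 4i-2$'' holds in the corner case $i=0$; and (iii) invoking the two hypotheses in the correct direction — $|V_1|=1$ enters through the vertex count in Euler's formula, while $\deg(w)\ge 2$ on $V_2$ is what turns the edge count into the $|V_2|$-term. One should also note that $G$ being connected (as it is whenever this lemma is applied) guarantees every face has size $\ge 2$, so no face of size $0$ spoils the count.
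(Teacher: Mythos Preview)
Your proof is correct and is essentially the same argument as the paper's: both combine the face-size double count $2e\ge 2k(f-i)+2i$ (respectively $2e\ge 2k(f-i)+4i-2$) with Euler's formula for $v=1+|V_2|$ and the edge bound $e\ge 2|V_2|$. The only difference is cosmetic ordering of the algebra---the paper first bounds $f$ from the face count and then substitutes into Euler, while you first extract $kf-e$ from Euler and then apply the face count---but the ingredients and logic are identical.
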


\begin{proof}

Summing up the face sizes and using lower bounds
for the faces of size at least $2k$ as well as for the
smaller faces, we get with $f$ the number of faces and $e$ the number of edges of $G$

$2e\ge 2k(f-i) +2i, \qquad $ so $ \qquad f\le \frac{e+(k-1)i}{k}.$

Inserting this, with $1+ |V_2|$ as the number of vertices of $G$ into the Euler formula we get

$2-2g=1+ |V_2|-e+f \le 1+ |V_2|-e+ \frac{e+(k-1)i}{k}= 1+ |V_2|-\frac{k-1}{k}e+\frac{k-1}{k}i.$

Since $e\ge 2|V_2|$ we get

$2-2g \le 1+ |V_2|-\frac{2k-2}{k}|V_2|+\frac{k-1}{k}i =  1 - \frac{k-2}{k}|V_2|+\frac{k-1}{k}i, \quad$ so

$\frac{k-1}{k}i  \ge 1 -2g + \frac{k-2}{k}|V_2|, \qquad $ thus $ \qquad  i\ge \frac{k}{k-1} -\frac{2k}{k-1}g + \frac{k-2}{k-1}|V_2|$

which is the first result.

If there is at most one face of size $2$, then

$2e\ge 2k(f-i) +4i -2,  \qquad $ so $ \qquad f\le \frac{e+(k-2)i +1}{k}.$

Starting with this formula, a completely analogous computation gives the second result.

\end{proof}

\section{Results on small genus}

\begin{lemma}\label{lem:existcut}
~\\
\begin{description}
\item[(a)] Let $G$ be a simple embedded graph with a $1$-cut
 that has a simple dual $G^*$.
  \begin{itemize}
  \item If $g(G)=1$, then $G^*$ has a cut of size at most $3$.
  \item If $g(G)=2$, then $G^*$ has a cut of size at most $5$.
    \end{itemize}

\item[(b)] Let $G$ be a simple embedded graph with a $2$-cut
  that has a simple dual $G^*$.

  If $g(G)=1$, then $G^*$ is at most $5$-connected.

\end{description}
\end{lemma}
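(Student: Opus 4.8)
The plan is to obtain every claimed cut from Corollary~\ref{cr:dualcut}, applied to a suitably small \emph{simple} face of the boundary multigraph $G_b$ (seen as a subgraph of $\bar G_b$), with Lemma~\ref{lem:boundarycut} as a fallback. Write $V_c$ for the cutset ($\{v_c\}$ in a.), $\{v_1,v_2\}$ in b.)), put $g_b:=g(G_b)\le g(G)$, and let $C_1,\dots,C_m$ be the components of $G-V_c$, so $m\ge 2$. As $G^\ast$ is simple, $G$ has minimum degree $3$; hence for a $1$-cut every vertex of $C_j$ keeps at least two neighbours in $C_j$, so $C_j$ contains a cycle, and the $G_b$-bridges of $\bar G_b$ are exactly the $C_j$ together with their edges to $v_c$. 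By Lemma~\ref{lem:in different faces} distinct bridges lie in distinct faces of $G_b$.

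\smallskip\noindent First I would classify the faces of $G_b$: \emph{full} (simple, internally plane, containing a bridge), in which the component $C$ inside has $F_C\ne\emptyset$ by Lemma~\ref{lem:classes}, so the interior carries a face of $G$; \emph{empty} (simple, bridge-free), which has size at least $4$ by Lemma~\ref{lem:2face}; and the rest, bridged or simple internally non-plane, whose numbers $b$ and $s_{np}$ satisfy $\tfrac b2+s_{np}\le g(G)-g_b$ by Lemma~\ref{lem:jordan}. Since adding internally plane internal components does not change the genus, if all bridges were in full faces then $g_b=g(G)$, so no bad faces occur and the number $\pi$ of full faces equals $m\ge 2$. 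Then I would run the Euler count on $G_b$: its bipartition classes are $V_c$ (one or two vertices) and $F_b$, every $F_b$-vertex has degree $\ge 2$ so $e(G_b)\ge 2|F_b|$; assuming every full face had size at least $2t+2$, where $t=3$ for a.)\ on the torus and $t=5$ otherwise, summing the face sizes with the bounds above and eliminating $e(G_b),f(G_b)$ via Euler leaves an inequality which, together with $\tfrac b2+s_{np}\le g(G)-g_b$ and $g_b\le g(G)$, forces $\pi\le 1$. Hence either a full face $\bar f$ has size at most $2t$, and then --- $\bar f$ carrying a face of $G$ in its interior and, via a second component with nonempty $F_C$, a face of $G$ in its exterior --- Corollary~\ref{cr:dualcut} gives a cut of $G^\ast$ of size at most $t$, as claimed; or $\pi\le 1$.

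\smallskip\noindent It remains to treat $\pi\le 1$. Here $\pi=0$ is impossible, as it would pack all $m\ge 2$ bridges into the few bad faces Lemma~\ref{lem:jordan} allows, while a simple internally non-plane face holds one bridge and bridged faces come in clusters sharing a single bridge. So $\pi=1$, and for $g(G)=1$ a short count (the bad part is either one simple non-plane face or two bridged faces, containing one common bridge in either case) forces $m=2$: a component $C_1$ in the full face $\bar f_1$ with $F_{C_1}\ne\emptyset$, and $C^\ast$ in the bad face(s). If $F_{C^\ast}\ne\emptyset$, the exterior of $\bar f_1$ carries a face of $G$, and rerunning the Euler count with $\pi=1$ now forces $|\bar f_1|\le 2t$, so Corollary~\ref{cr:dualcut} finishes. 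The case $F_{C^\ast}=\emptyset$ is the main obstacle: only $C_1$ carries faces of $G$, so Corollary~\ref{cr:dualcut} and Lemma~\ref{lem:boundarycut} are both out of reach, and one must exploit the very rigid local structure around $C^\ast$ --- every face meeting $C^\ast$ is a boundary face, Lemma~\ref{lem:atleast5} yields $|F_b|\ge 5$, and Lemma~\ref{lem:smallfaces} in its ``at most one $2$-face'' form can be applied to $C^\ast\cup V_c$ together with these boundary faces --- in order to exhibit a cut of $G^\ast$ of size at most $t$ directly.

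\smallskip\noindent For part b.)\ the only essential changes are that the $V_c$-class of $G_b$ has two vertices, that a component of $G-V_c$ need not contain a cycle (but is then small and can be handled by hand), and that $t=5$; the inequalities above were written to absorb this. For the genus-$2$ case of a.)\ the scheme is identical but with more sub-cases in the $\pi\le 1$ step, since Lemma~\ref{lem:jordan} now permits up to four bad faces; the larger admissible cut size $5$ leaves the required room.
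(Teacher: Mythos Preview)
For part a.) you are using the same toolkit as the paper (the boundary multigraph $G_b$, Corollary~\ref{cr:dualcut}, Lemma~\ref{lem:jordan}, Lemma~\ref{lem:smallfaces}), but your case split around $\pi\le 1$ is both more involved than necessary and, as you concede, incomplete at the ``main obstacle''. The paper's organisation is more direct. It first disposes of small $|F_b|$ (if $|F_b|<4$ for $g=1$, resp.\ $|F_b|<5$ for $g=2$, then via Lemma~\ref{lem:atleast5} every component has $F_C\ne\emptyset$ and $F_b$ itself is the desired cut). In the remaining range it applies Lemma~\ref{lem:smallfaces} to $G_b$ with $k=4$ (resp.\ $k=6$) to obtain a lower bound $i$ on the number of faces of size $<2k$, and then simply subtracts the Lemma~\ref{lem:jordan} bound on bridged and internally non-plane faces. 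In every subcase this leaves at least \emph{two} small faces that are simple and internally plane; since for a $1$-cut every face of $G_b$ contains exactly one bridge (your ``empty'' class is vacuous: between any two consecutive $F_b$-edges in the rotation at $v_c$ there is always a $G$-edge), each of these carries an interior face of $G$ by Corollary~\ref{cor:classes}, and Corollary~\ref{cr:dualcut} applies at once. Your $F_{C^\ast}=\emptyset$ obstacle never needs to be confronted.

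For part b.) there is a genuine gap: the paper does \emph{not} extend the counting argument to $2$-cuts at all. It argues by contradiction, invoking the classification of $6$-connected toroidal graphs from~\cite{hexagontorus} --- these are exactly the duals of the hexagonal tilings of the torus --- and then verifying directly that any such tiling with simple dual is already $3$-connected, contradicting the assumed $2$-cut of $G$. Your intended extension of the part-a.) machinery runs into obstacles you have not resolved: Lemma~\ref{lem:smallfaces} is stated only for $|V_1|=1$; for a $2$-cut the boundary multigraph $G_b$ need not even be connected, so the Euler count cannot be run as written; and components of $G-V_c$ need not contain cycles, so Corollary~\ref{cor:classes} is unavailable. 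The remark that ``the inequalities above were written to absorb this'' does not substitute for an argument, and it is not clear that a purely local counting proof of b.) exists without the structural input the paper uses.
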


\begin{proof}

(a) Let $v$  be a cutvertex of $G$.
Due to Corollary~\ref{cr:dualcut}, it is sufficient to show
that $G_b$ has a simple face $\bar f_1$ with a face
of $G$ in the interior and exterior
and, for $g=1$, boundary length at most $6$
(and thus $|F_b(C)|\le 3$ for some component $C$)
 and, for $g=2$,
boundary length at most $10$ (and thus $|F_b(C)|\le 5$).

Due to Lemma~\ref{lem:in different faces}, $G_b$ has at least two faces. If
$|F_b|< 5$, they both contain a face of $G$ (Lemma~\ref{lem:atleast5}), so that
$F_b$ is a cutset in the dual (Lemma~\ref{lem:boundarycut}).
If $g=1$ and $|F_b|< 4$ or $g=2$ and $|F_b|< 5$ we are done.

We will use that the graph $G_b$ satisfies the conditions of
Lemma~\ref{lem:smallfaces} with $V_1=\{v\}$ and $V_2=F_b$.

Now assume $g=1$ and $|F_b(C)|\ge 4$. Then -- with $g'$ the genus of
$G_b$, $i$ as in Lemma~\ref{lem:smallfaces}, and $k=4$ -- Lemma~\ref{lem:smallfaces} gives:

$$i\ge \frac{4}{3} -\frac{8}{3}g' + \frac{8}{3}=4-\frac{8}{3}g'.$$

If $g'=1$, then $i\ge \frac{4}{3}$, so  $i\ge 2$. Due to Corollary~\ref{cor:classes} and
Lemma~\ref{lem:jordan}, at least two faces of $G_b$ with at most $3$ elements of $F_b$
in the boundary are simple and contain an interior face of $G$, so that we can apply Lemma~\ref{lem:boundarycut}.

If $g'=0$, then $i\ge 4$, so again at least two faces of $G_b$
with at most $3$ elements of $F_b$
in the boundary are simple and internally planar and contain an interior face of $G$.
Again Lemma~\ref{lem:boundarycut} gives the desired result.

\medskip

Assume now $g=2$ and $|F_b(C)|\ge 5$. Then -- with $g'$ the genus of
$G_b$ and $k=6$ -- Lemma~\ref{lem:smallfaces} gives:

$$i\ge \frac{6}{5} -\frac{12}{5}g' + 4.$$

If $g'=2$, then $i\ge \frac{2}{5}$,
so there is a face $f$ of $G_b$ with at most $5$ elements of $F_b$
in the boundary and, due to Corollary~\ref{cor:classes} and Lemma~\ref{lem:jordan},
this and also
at least one other face are simple, internally planar,
and have an interior face of $G$.
We can apply Lemma~\ref{lem:boundarycut} to $f$.

If $g'=1$, then $i\ge \frac{14}{5}$, so $i\ge 3$.
This implies that there is at least one simple, internally planar face of $G_b$
with an interior face of $G$ and at most $5$ elements of $F_b$ in the boundary.
If there is another simple
internally planar face of $G_b$ or another face of $G_b$ with an internal face of $G$, we are done,
but in principle it is possible that there are just $3$ faces of $G_b$, and that
the other two are bridged and do not contain an internal face of $G$.
In this case, we have
(Lemma~\ref{lem:2face}) that there is at most one face of $G_b$ of size $2$ and the second part of
Lemma~\ref{lem:smallfaces} gives:

$$i\ge \frac{5}{4} -\frac{12}{4} + 5=\frac{13}{4}.$$

To this end $i\ge 4$ and there is at least one more simple
internally planar face of $G_b$ -- which in fact even has a short boundary.

If $g'=0$, then $i\ge \frac{26}{5}$, so $i\ge 6$ and it follows immediately that we have
at least two simple internally planar faces of $G_b$ with sufficiently short boundary.

\bigskip

(b) Suppose that $G^*$ has no cut of size at most $5$, but is the simple dual
of a graph $G$ with a $2$-cut.
The simple 6-connected toroidal graphs have been described in \cite{hexagontorus}. They are the duals
of hexagonal tilings of the torus and can be parametrized by three values $p,r > 0$, $0\le q < p$.
The construction is given in Figure~\ref{fig:hexagontorus}: for a segment of $p\times r$ hexagons
of the hexagonal lattice, the upper and lower
as well as the left and right boundaries are identified and
the left ($p$-) part is shifted by $q$ positions before identification.
For small values of $p,r$, the
graph or the dual can have multiple edges, but if the graph and the dual are simple, the graph
$G$ -- that is, the hexagonal tiling -- is also 3-connected:

Let $v_1,v_2$ be two vertices, $V'=V\setminus \{v_1,v_2\}$ and $h$
an arbitrary hexagon. If $h$ does not contain a vertex of $\{v_1,v_2\}$, all vertices of
$V'\cap h$ belong to the same component of $V'$. Assume $v_1\in h$.
Because the dual is simple, all vertices in the three hexagons around $v_1$
are pairwise distinct. This implies that all vertices in the boundary cycle of the three
hexagons that are also in $V'$ belong to the same component (even if the cycle contains $v_2$).
So for each hexagon, all vertices in the boundary
that are not in $V'$ belong to the same component.
Since the dual is $6$-connected, for each pair of hexagons, a path of hexagons can be found
showing that the boundary vertices are in the same component -- so $G$ has no $2$-cut.

\begin{figure}[h!t]
	\centering
	\includegraphics[width=0.6\textwidth]{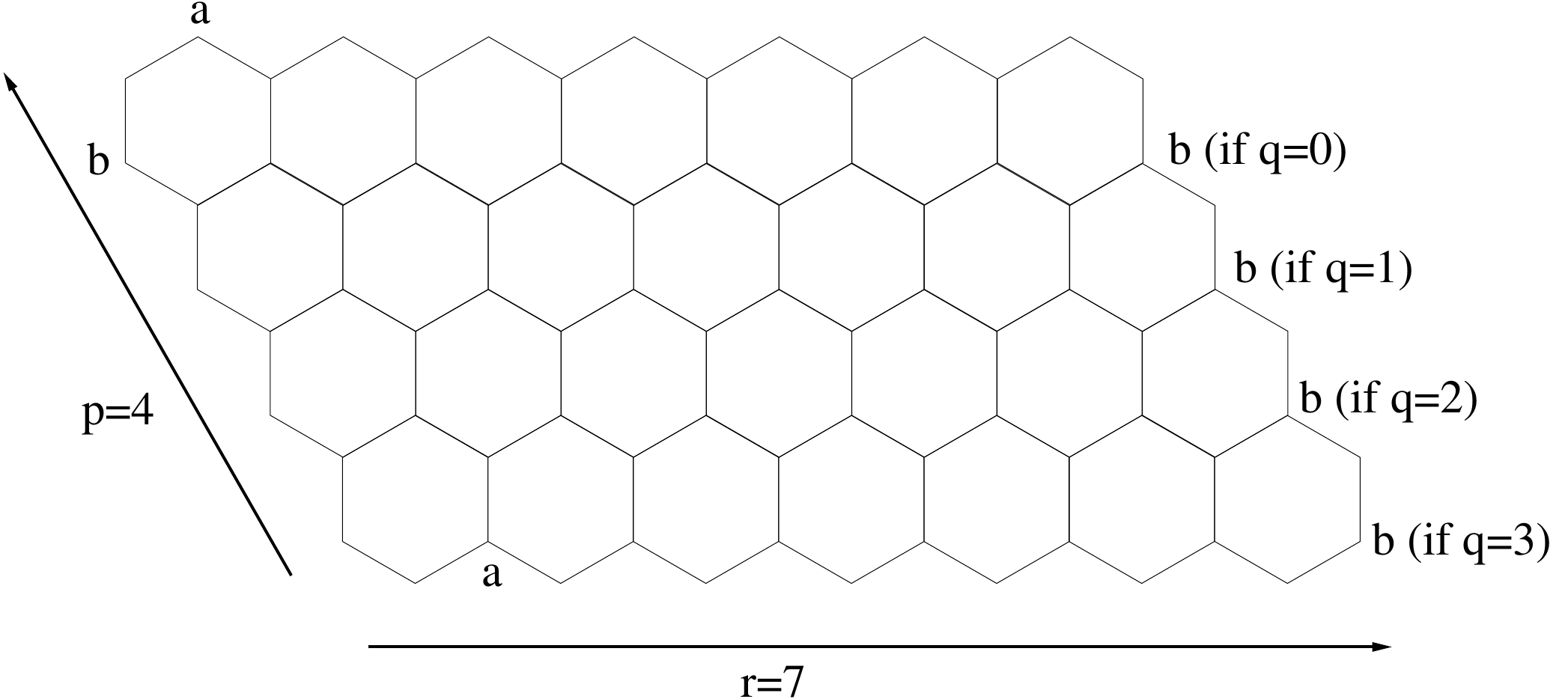}
	\caption{The parametrization of hexagonal tilings on the torus.}
	\label{fig:hexagontorus}
\end{figure}

\end{proof}

Without the assumption of $G^*$ being simple, the statement (a) of Lemma~\ref{lem:existcut} is not true.
An easy counterexample would be the dual of $K_7$ embedded on the torus with
an extra vertex of degree 1 added inside one of the faces. The dual of this graph with a 1-cut
would be $6$-connected: $K_7$ with an extra loop at one of the vertices.

\section{The H-operation}

In order to prove an upper bound for $\delta_2(c)$ we will now describe an operation that introduces a 2-cut in an embedded graph
without changing the (abstract) dual graph.

\begin{definition}

Let $G$ be a simple embedded graph and $x,x'$ be different  vertices of $G$ with
 $e_1,e_2,\dots ,e_n$ the rotation of incident edges around $x$
and $e'_1,e'_2,\dots ,e'_m$  the rotation around $x'$.
Then we say that the graph
where the vertices $x,x'$ are replaced by one vertex $y$
with rotation $e_1,e_2,\dots ,e_n,e'_1,e'_2,\dots,$ $e'_m$
of incident edges
is obtained from $G$ by \emph{identifying the angles $e_n,e_1$ and $e'_m,e'_1$}.

\end{definition}

By counting vertices, edges, and faces it is easy to see that if two angles in different faces
are identified, the genus is increased by one, and if two angles in the same face are identified,
the genus remains the same.

\begin{definition}

Let $G$ be a simple embedded graph with a simple dual and minimum degree at least $2$.
Let $x,y$ be adjacent vertices of $G$ with
degree $3$ and pairwise different neighbours.
Let the rotations around $x$ respectively $y$ be (in vertex notation)
$y,w',v$ respectively $x,w,v'$ (compare Figure~\ref{fig:operation}), $a_1$ be the
vertex before $x$ in the rotation around $v$, $b_m$ be the vertex after $x$
in the rotation  around $w'$, $a_n$ be the vertex after $y$
in the rotation  around $w$, and $b_1$ be the vertex before
$y$ in the rotation  around $v'$.

Then the result of identifying the two angles $(v,a_1),(v,x)$ and $(v',b_1),(v',y)$
and also the angles $(w,y),(w,a_n)$ and $(w',x),(w',b_m)$ is called
the result of the \emph{H-operation} applied to the edge $\{x,y\}$.
We write $H_{\{x,y\}}(G)$. See Figure~\ref{fig:operation}
for an illustration. It is possible that the H-operation produces double edges
and loops.

\end{definition}

\begin{figure}[h!t]
	\centering
	\includegraphics[width=0.8\textwidth]{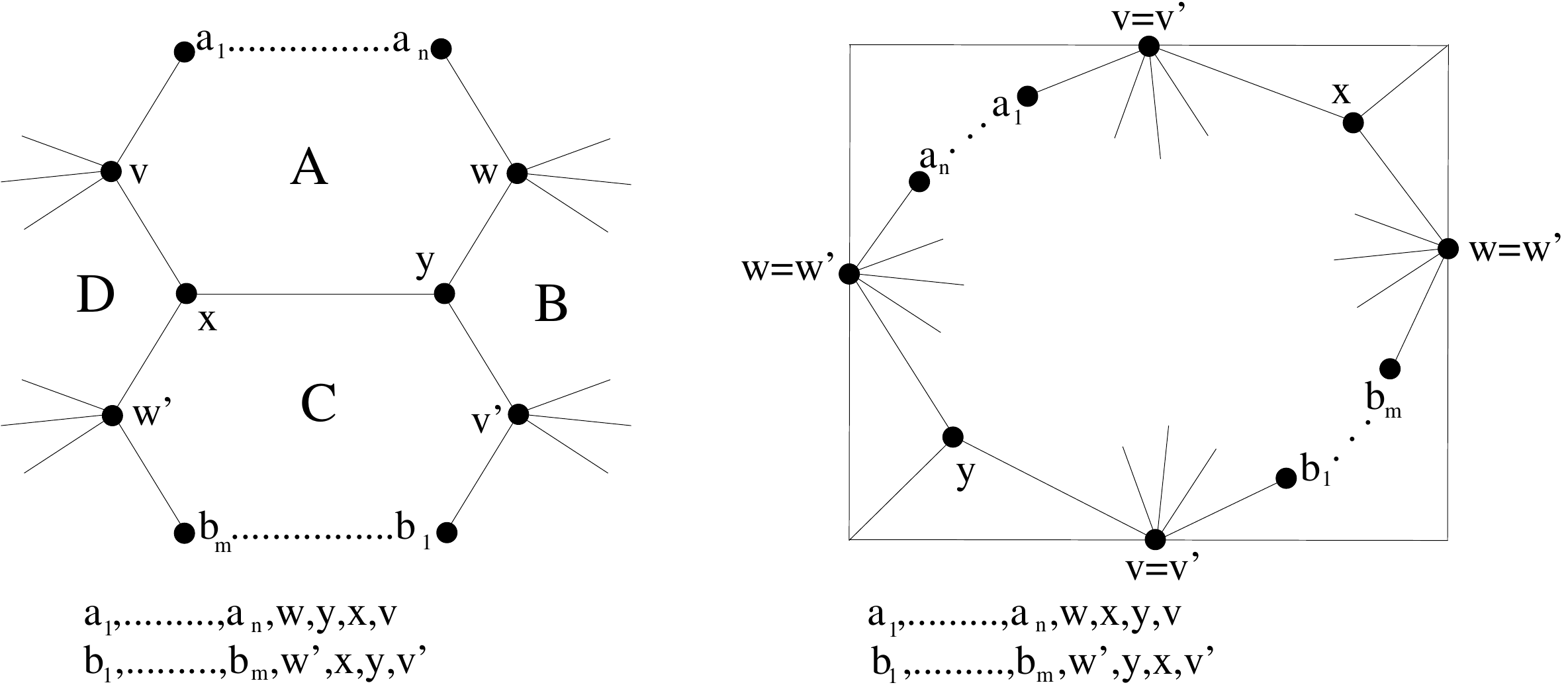}
	\caption{The H-operation applied to an embedded graph and the vertices in the facial walk around the
          boundaries of the old and new faces.}
	\label{fig:operation}
\end{figure}

After one of the angle identifications, the genus is increased by one, but the second identification
is then applied to angles in the same face, so the H-operation increases the genus only by one. After
the operation, the former vertices $v,w,v',w'$ are identified to $2$ vertices that separate $x$ and $y$
from the rest. The H-operation has an impact on two faces that are replaced by two other faces.
Following the face boundaries of the new faces, one sees that there is a 1-1 correspondence
between the old and new faces that induces an isomorphism of the dual graph.
In fact it would not have been necessary to require a simple dual, so that the faces $A,C$ in Figure~\ref{fig:operation}
are different, but as we only need the operation in this restricted setting, we only discussed the case of a simple dual.
We will condense
these observations in a note:

\begin{note}\label{note:h-operation}

  If $G$ is a simple embedded graph of genus $g$ with a simple dual and an edge $e$ to
  which the H-operation can be applied without producing double edges
  or loops, then $H_e(G)$ is a simple graph of genus $g+1$ with a 2-cut and a
  dual graph that is isomorphic to the dual of $G$.

\end{note}

\begin{lemma}\label{lem:h-possible}

Let $G$ be a simple embedded graph with all faces of size at least $5$ that has a simple dual.
If $v_1$ is a vertex where all vertices at distance at most $2$ of $v_1$ have degree $3$,
then, for each edge $e$ incident with $v_1$, the graph $H_{e}(G)$ is simple.

\end{lemma}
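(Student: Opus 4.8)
The claim is that if $G$ is an embedded graph with all faces of size at least $5$ and a simple dual, and $v_1$ has the property that every vertex at distance at most $2$ from $v_1$ has degree $3$, then for every edge $e$ incident with $v_1$, the graph $H_e(G)$ is simple, i.e.\ the H-operation applied to $e$ produces neither a double edge nor a loop. Recall from the definition of the H-operation that it is applied to an edge $e=\{x,y\}$ with both endpoints of degree $3$ and with all six neighbours of $x$ and $y$ pairwise distinct; here we take $x=v_1$, and $y$ a neighbour of $v_1$, both of degree $3$ by hypothesis. The operation identifies four angles, fusing the four outer vertices $v,w,v',w'$ (neighbours of $x$ other than $y$, and of $y$ other than $x$) into two new vertices, while the edge set is unchanged apart from this identification. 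So new loops or double edges can only be created among the edges incident to these identified vertices.

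The plan is to enumerate the ways a loop or multi-edge could arise and rule each out using the degree-$3$ and simple-dual hypotheses. First, \emph{loops}: a loop at one of the two new vertices would require that two of $\{v,w,v',w'\}$ being identified are already adjacent, or that an edge runs between the two vertices in each merged pair. Since $x$ and $y$ have pairwise different neighbours, the four outer vertices are distinct, and the only pre-existing edges among them would again force the dual to have a double edge or loop (two faces of $G$ sharing two edges, or a face incident to an edge twice), which is excluded; here I would use that a simple dual forces $G$ to have no face of size less than $3$ and forces any two faces to share at most one edge — combined with "all faces of size $\ge 5$" this is quite restrictive. Second, \emph{double edges}: after identification, two parallel edges between the new vertices $y_1,y_2$ would come from two edges $e_1,e_2$ of $G$, each with one endpoint in $\{v,w\}$-type and one in $\{v',w'\}$-type (or from pairs already incident to a common outer vertex that becomes identified). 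The key point is that the faces affected by the operation have size $\ge 5$, so the facial walks around the two replaced faces are long enough that the identification cannot bring two edges into parallel position; and any candidate parallel pair not arising from the operation itself would again be parallel already in $G$, contradicting simplicity, or would create a $2$-face whose dual is a double edge.

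More carefully, I expect the argument to go through a case analysis on \emph{which} edges become incident to the merged vertices. The merged vertex from $\{v,v'\}$ picks up all edges at $v$ and all at $v'$; since $\deg v=\deg v'=3$ (these are at distance $\le 2$ from $v_1$, hence degree $3$), that new vertex has degree at most $6$, and explicitly its neighbours are the (at most $4$) neighbours of $v$ and $v'$ other than $x,y$, together with $x$ and $y$ replaced appropriately. A double edge at the new graph means two of these $\le 6$ incident edges have the same other endpoint; tracing through, this other endpoint would be a common neighbour of $v$ and $v'$, or would be $y$ reached twice — and a common neighbour of $v$ and $v'$ together with the face structure forces either a pre-existing double edge or a face of size $\le 4$, both contradictions. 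The symmetric analysis applies to the merged vertex from $\{w,w'\}$.

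The main obstacle is the bookkeeping: correctly reading off, from Figure~\ref{fig:operation} and the definition, exactly which angles are identified and hence which vertices and edges become incident in $H_e(G)$, and then for each potential bad configuration producing the short face or the non-simple dual that yields the contradiction. The hypothesis "all faces of size at least $5$" is what rules out the short-face contradictions, and "simple dual" rules out the double-edge-in-$G$ contradictions; the "distance at most $2$" hypothesis is precisely what guarantees $v,w,v',w'$ and their other neighbours all have degree $3$, so that the merged vertices have small, controllable degree and few neighbours to check. I would organize the write-up as: (1) set $x=v_1$, fix $e=\{x,y\}$, note $y$ and all of $v,w,v',w'$ have degree $3$; (2) describe $H_e(G)$'s local structure; (3) rule out loops; (4) rule out double edges via the two merged vertices, using in each subcase either a pre-existing double edge in $G$ or a face of size $\le 4$.
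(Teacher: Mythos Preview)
Your overall strategy matches the paper's: reduce simplicity of $H_e(G)$ to showing that each pair of vertices being identified is distinct, non-adjacent, and has no common neighbour, and derive contradictions from the simple-dual and face-size hypotheses. That is exactly what the paper does.

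There is, however, one genuine gap in your plan. The H-operation performs \emph{two} angle identifications in sequence, and you treat the two merges $\{v,v'\}$ and $\{w,w'\}$ symmetrically and independently. The paper is explicit that after the first pair (their $v_4,v_6$) is identified, the distance check for the second pair (their $v_3,v_5$) must be redone in the \emph{new} graph: a shortest $v_3$--$v_5$ path might now pass through the merged vertex. The paper handles this by observing that such a path of length $\le 2$ would force $v_3$ or $v_5$ to be adjacent to $v_4$ or $v_6$ in $G$, which was already excluded. Your write-up should make this two-stage check explicit.

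A second, smaller point: your contradictions lean on ``face of size $\le 4$'', but the paper's arguments mostly use the simple-dual hypothesis directly (the six faces $f_1,\dots,f_6$ around $\{v_1,v_2\}$ are pairwise distinct, so any coincidence among the outer vertices or their neighbours forces a repeated face, hence a loop or double edge in the dual). The face-size bound is invoked only lightly (to rule out a triangle in one subcase). Also, you assert that $v,w,v',w'$ are distinct because ``$x$ and $y$ have pairwise different neighbours'', but that is a \emph{precondition} in the definition of the H-operation, not a given; the paper verifies it from the simple-dual hypothesis, and your proof should too.
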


\begin{proof}

We use the notation of Figure~\ref{fig:h-possible}
and without loss of generality let $e=\{v_1,v_2\}$. We have to show that $v_4$ and $v_6$ are different, non-adjacent
and do not have a common neighbour,
i.e.\  that the distance $d(v_4,v_6)$ is at
least $3$. Furthermore we have to show that the same is true for $v_3$ and $v_5$ -- even
after $v_4$ and $v_6$ have been identified.

\begin{figure}[h!t]
	\centering
	\includegraphics[width=0.35\textwidth]{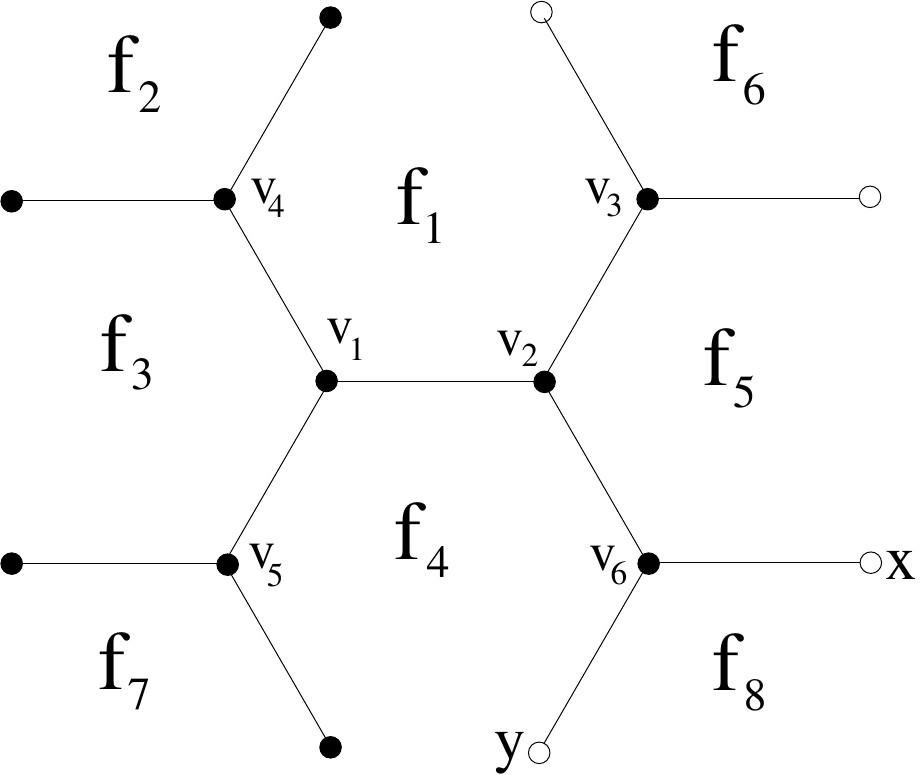}
	\caption{A part of an embedded graph with a vertex $v_1$, so that all vertices at distance at most $2$
have degree $3$. Vertices with degree $3$ are marked by a filled circle, while vertices with degree $3$ or larger
are marked by an empty circle.}
	\label{fig:h-possible}
\end{figure}

As the dual is simple, the faces $f_1,f_2,\dots ,f_6$ are pairwise distinct.
If we had, e.g., $v_4=v_6$, then $f_5\in \{f_1,f_2,f_3\}$,
which would imply a loop or double edge at $f_1$ in the dual.
Analogously, we can conclude that
$v_1,\dots ,v_6$ are pairwise distinct.

Furthermore, $v_3,v_4,v_5,v_6$ are pairwise non-adjacent: \\
$v_4$ cannot be adjacent to $v_5$ since this would imply $f_2=f_3$ or that
$f_3$ is a triangle. The same argument shows that there is no edge $\{v_3,v_6\}$,
because both have degree $3$.
In addition,
$v_4$ cannot be adjacent to any other vertex $v_i\not= v_1$ in the boundary of $f_4$
(and analogously for $v_5$ and $f_1$).
At this neighbour (which would have degree $3$) at least one of the faces $f_1,f_3$ would have a double edge
with $f_4$ in the dual.

If $\{v_3,v_4\}$ was an edge of $G$, we had $f_6\in \{f_1,f_2,f_3\}$, which is again impossible.
The case for  $\{v_5,v_6\}$ is symmetric.

Suppose now that $d(v_4,v_6)=2$. Then $v_4$ is adjacent to $x$ since $y$
is in the boundary of $f_4$. Then $x$ had degree $3$, so that
$\{f_5,f_8\}\cap \{f_1,f_2,f_3\}\not= \emptyset$.  The only
possibility that does not immediately imply double edges or loops in
the dual is $f_8=f_2$. Looking at the rotation  around $v_4$, we
would get that $f_5$ would share a second edge with $f_1$.

After $v_4$ and $v_6$ have been identified, a shortest path between $v_3$ and $v_5$ was already
present before the identification (and thus have length at least $3$) or contain $v_4=v_6$ after
the identification as an intermediate vertex. If this path had length $2$, $v_3$ and $v_5$ would
be adjacent to $v_4$ or $v_6$ already before the identification, which is not the case,
so $d(v_3,v_5)\ge 3$ (and in fact equal to $3$) after the identification of $v_4$ and $v_6$.

\end{proof}

\begin{lemma}\label{lem:bigfaces}

For $c\ge 3$, let $K_{c+1}$
be embedded with minimal genus
and $d\ge 0$ be minimal so that $(c-2)(c-3)+d \equiv 0\;\;(\modu 12)$.
Let $F_L$ denote the set of faces that are not triangles and $s(f)$ the size of a face $f$.
Then, $K_{c+1}$ has
$f= \frac{c^2}{3} + \frac{c}{3} - \frac{d}{6}$ faces
and we have $\sum_{f'\in F_L} (s(f')-3)=  \frac{d}{2}$.
\end{lemma}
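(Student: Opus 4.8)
The plan is to set up the problem as a pure counting exercise using the Euler formula, exploiting the fact that the minimal genus of $K_{c+1}$ is given by the Ringel--Youngs formula. First I would recall that the genus of $K_n$ is $g(K_n) = \lceil (n-3)(n-4)/12 \rceil$, so for $n = c+1$ we have $g = g(K_{c+1}) = \lceil (c-2)(c-3)/12 \rceil$. The quantity $(c-2)(c-3) + d$ with $d$ minimal and $(c-2)(c-3)+d \equiv 0 \pmod{12}$ is precisely $12g$, so $12g = (c-2)(c-3) + d$. I would keep this identity at hand: it is the single arithmetic fact that makes both formulas come out cleanly.

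Next I would write down the Euler formula for the embedding: with $v = c+1$ vertices and $e = \binom{c+1}{2} = \frac{c(c+1)}{2}$ edges, we get $f = 2 - 2g - v + e = 2 - 2g - (c+1) + \frac{c(c+1)}{2}$. Substituting $2g = \frac{(c-2)(c-3)+d}{6}$ and simplifying the polynomial in $c$ should yield exactly $f = \frac{c^2}{3} + \frac{c}{3} - \frac{d}{6}$; this is a routine algebraic simplification that I would carry out but not belabor here. For the face-size sum, I would use the handshake-type identity $\sum_f s(f) = 2e = c(c+1)$, split the sum as $\sum_f (s(f) - 3) = 2e - 3f$, and observe that $\sum_{f' \in F_L}(s(f') - 3) = \sum_f (s(f)-3)$ since triangles contribute zero. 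Plugging in $2e = c(c+1)$ and the value of $f$ just obtained, the expression $c(c+1) - 3\left(\frac{c^2}{3} + \frac{c}{3} - \frac{d}{6}\right) = \frac{d}{2}$ follows after the $c$-terms cancel.

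The one genuinely substantive point — as opposed to bookkeeping — is the appeal to the Ringel--Youngs theorem to identify the minimal genus with $\lceil (c-2)(c-3)/12 \rceil$; everything else is forced by Euler's formula and edge-counting. I would state this reliance explicitly and cite it. A minor subtlety worth a sentence: one should check that $f$ as computed is a positive integer (so that the claimed face count is consistent), which follows because $(c-2)(c-3)+d \equiv 0 \pmod{12}$ forces $d \equiv -(c-2)(c-3) \pmod{12}$ and hence $d/6$ has the right parity to combine with $\frac{c^2+c}{3}$ into an integer — but since the statement only asserts the value, not integrality per se, I would mention this only in passing. I do not anticipate a real obstacle; the risk is purely in keeping the polynomial arithmetic in $c$ and $d$ straight.
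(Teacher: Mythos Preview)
Your proposal is correct and follows essentially the same approach as the paper: both invoke the Ringel--Youngs genus formula to write $12g=(c-2)(c-3)+d$, plug $v=c+1$ and $e=\tfrac{c(c+1)}{2}$ into the Euler formula to obtain $f$, and then use the handshake identity $2e=\sum_f s(f)$ (equivalently $2e=3f+\sum_{f'\in F_L}(s(f')-3)$) to extract $\sum_{f'\in F_L}(s(f')-3)=\tfrac{d}{2}$. The only cosmetic difference is that the paper equates two expressions for $f$ while you substitute directly into $2e-3f$; the content is identical.
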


\begin{proof}

The minimum genus $g$ is $\left\lceil \frac{(c-2)(c-3)}{12}\right\rceil$ \cite{genus_complete_graph}.
The number $e$ of edges in
$K_{c+1}$ is $\frac{(c+1)c}{2}$ and the number $v$ of vertices is $c+1$.
So, with $f$ the
number of faces, we get by Euler's formula

%$ 2-2g=v-e+f= c+1 -\frac{(c+1)c}{2} +f$

$ 2- 2\frac{(c-2)(c-3)+d}{12} = c+1 -\frac{(c+1)c}{2} +f$ and thus

%$ 2- \frac{c^2-5c+6+d}{6} = c+1 -\frac{c^2+c}{2} +f$

$f= \frac{c^2}{3} + \frac{c}{3} - \frac{d}{6}$.

For all $f'\in F_L$, we have that $s(f')>3$, so

$2e=3f+\sum_{f'\in F_L} (s(f')-3) \quad$ so

$\quad f=\frac{2e-\sum_{f'\in F_L} (s(f')-3)}{3}= \frac{c^2+c-\sum_{f'\in F_L} (s(f')-3)}{3}$.

Inserting this into the previous equation, we get

$ \frac{c^2+c-\sum_{f'\in F_L} (s(f')-3)}{3}= \frac{c^2}{3} + \frac{c}{3} - \frac{d}{6}$ and finally

$\sum_{f'\in F_L} (s(f')-3)=  \frac{d}{2}$.

\end{proof}

\begin{lemma}\label{lem:completedualsimple}
For $c\ge 3$, the
complete graph $K_{c+1}$
can be embedded in a surface of minimal genus
in a way that the dual is simple.

\end{lemma}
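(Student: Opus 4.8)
The plan is to show that one can embed $K_{c+1}$ in a surface of minimal genus $g=\lceil(c-2)(c-3)/12\rceil$ so that the dual is simple, i.e.\ so that no two faces share two edges and no face is incident with itself along two sides. By Lemma~\ref{lem:bigfaces}, such a minimal-genus embedding has all faces triangular except for a small "defect" $\sum_{f'\in F_L}(s(f')-3)=d/2$, where $d\in\{0,\dots,11\}$ is the residue making $(c-2)(c-3)+d\equiv 0\pmod{12}$. First I would dispose of the case $d=0$: here every face is a triangle, and a triangular embedding of $K_{c+1}$ has a simple dual exactly when no two triangles share two edges, which for $c+1\ge 5$ is automatic because two triangles sharing two edges would force a multiple edge or a vertex of degree $2$ in $K_{c+1}$; the only genuinely small cases ($c\in\{3,4\}$, giving $K_4$ and $K_5$) can be checked by hand, and for $K_5$ one exhibits the standard toroidal triangulation whose dual is the simple $5$-regular graph on $5$ vertices (it is $K_5$ itself, actually $C_5$ with... — this small case is verified directly).

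For $d>0$ the embedding has a few non-triangular faces; the key step is to control how these arise. The cleanest route is to start from the classical minimal-genus embeddings of $K_{c+1}$ (Ringel--Youngs and the current-graph constructions), which are "almost triangulations" in a very structured way: in the relevant residue classes mod $12$ the non-triangular faces are a bounded number of quadrilaterals, or a single face of size $3+d/2$, placed along a prescribed edge or at a prescribed vertex. I would then argue that a double edge in the dual can only occur at such a non-triangular face, because two triangles meeting in two edges is impossible in a simple graph on $\ge 5$ vertices (as above), and a triangle meeting a non-triangular face in two edges forces either a repeated vertex on the larger face's boundary or a multiple edge in $K_{c+1}$. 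So it suffices to guarantee that each non-triangular face $f$ has a \emph{simple} facial walk (all vertices distinct) and that its neighbouring faces along $f$ are pairwise distinct triangles not sharing a second edge with $f$ — a local condition. If the standard construction already satisfies this, we are done; otherwise I would apply a local surgery that fixes a bad non-triangular face without changing the genus, e.g.\ rerouting one edge across $f$ (an angle identification in the same face, as in the Definition preceding Note~\ref{note:h-operation}, which preserves genus) to split $f$ into smaller faces with simple walks, repeating until all large faces are "clean".

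The main obstacle I anticipate is precisely the bookkeeping for the non-triangular faces in each residue class $d\in\{1,\dots,11\}$: one must either cite the detailed structure of the Ringel--Youngs embeddings (which face sizes occur, how many, and where) or construct the desired embedding from scratch via current graphs with the extra property built in. A secondary subtlety is the finitely many small values of $c$ (say $c\le 6$ or so), where the asymptotic constructions do not apply and the embedding must be exhibited or verified individually; these should be collected into a short case check. If the paper prefers to avoid invoking the structure of the genus embeddings, an alternative is an inductive/amalgamation argument: take a simple-dual triangulation of $K_c$ (or of a slightly smaller complete graph with the right residue) and add the new vertex inside a suitable face, triangulating around it while adding handles as needed and checking at each step that no dual double edge is created — but this too ultimately requires tracking the defect $d$, so the combinatorial case analysis seems unavoidable.
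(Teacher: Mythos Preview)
Your proposal gestures at the right idea but buries it as a contingency. The edge-rerouting surgery you mention in passing is precisely what the paper makes its \emph{primary} tool, under the name \emph{chord exchange}: starting from an \emph{arbitrary} minimum-genus embedding of $K_{c+1}$, one repeatedly takes a face $f$ of size $>5$, picks two boundary vertices $v,w$ at facial distance $\ge 2$ (so $\{v,w\}$ is an edge of $K_{c+1}$ but not of $f$), deletes $\{v,w\}$ from its current position in the rotation system and reinserts it in angles of $f$. Provided the two faces formerly containing $(v,w)$ and $(w,v)$ were distinct, the face count and hence the genus are unchanged, and $s(f)$ drops by one. Since by Lemma~\ref{lem:bigfaces} the total defect is $\sum_{f'\in F_L}(s(f')-3)=d/2\le 5$, there is at most one face of size $>5$ and always enough candidate chords to avoid the bad configurations, so one reaches maximum face size $\le 5$ after finitely many steps.

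This makes the ``bookkeeping for each residue class'' you anticipated disappear entirely: one never needs to know what the Ringel--Youngs construction actually looks like. What replaces it is a short chain of elementary facts valid in any simple graph of minimum degree $\ge 3$: every face of size $\le 5$ has a simple boundary walk; a triangle cannot share two edges with any other face; and no triangle or quadrangle can contain two of the reverse edges of any facial $3$-path, so a quadrangle or pentagon cannot share two edges with a triangle or quadrangle. Hence an embedding with maximum face size $5$ has simple dual \emph{unless} two pentagons share two edges, and that single leftover configuration is dispatched with two or three further chord exchanges. Two small corrections: your appeal to ``angle identification in the same face'' is the wrong mechanism (that operation merges two vertices into one); genus preservation here comes simply from the face count, since removing the chord merges two faces and reinserting it splits one. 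And the base cases are just $K_4$ (unique planar self-dual embedding) and $K_5$ (self-dual torus embedding); from $c\ge 5$ the chord-exchange argument runs uniformly.
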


\begin{proof}

In a simple embedded graph with minimum degree at least $2$,
all faces $f$ with $s(f)\le 5$ are simple -- that is:
each vertex $v$ of the face occurs exactly in two directed
edges of the face. Otherwise, the distance between two occurrences
as start- and end-vertex of
a directed edge would be at most $2$, so
the graph would have a loop (distance $1$) or the face would contain a path $(v,v_1),(v_1,v)$
which would imply that there is a double edge or that $v_1$ has degree $1$.

No two different faces in an embedded graph $G$ with minimum degree $3$
and only simple faces
can share two consecutive edges $(v_1,v_2),(v_2,v_3)$ that are part of a face, as in that case
$v_2$ would have degree $2$. This implies that in such a graph a triangle cannot share more than
one edge with another face.

Let now $(v_1,v_2),(v_2,v_3),(v_3,v_4)$ be a subpath in a facial walk of a face $f$
in a (simple) embedded graph $G$ with minimum degree $3$
and simple faces.
We will show that there is no face different from $f$ that is a triangle or quadrangle and
contains two of the directed edges $(v_2,v_1),(v_3,v_2),(v_4,v_3)$.
As shown, no face different from $f$
can contain two of these edges sharing a vertex $v_i$.
The only remaining case is that a quadrangle $f_q$ contains
$(v_2,v_1)$ and $(v_4,v_3)$ and,
in addition for each of $\{v_1,v_3\}$ and $\{v_2,v_4\}$,
exactly one of the two corresponding directed edges.
Because there must be a directed edge with initial vertex $v_1$, $f_q$ must contain $(v_1,v_3)$ -- which implies that
there is no edge in $f_q$ with initial vertex $v_3$. Since each two edges in a quadrangle are contained in a facial path of
length $3$, this implies that two quadrangles cannot share more than one edge with each other.

As a consequence, we have that duals of embeddings of the complete graph with maximum face size $5$
and at most one pentagon are simple, because in a face $f$ that is a quadrangle or a pentagon,
each pair of different edges is contained in a path in $f$ of length $3$.

From the main result of \cite{facedistributions} (Theorem~3.3 in the arXiv paper and Theorem~2.2 in the paper in \emph{Journal of Graph Theory})
it follows that for each $n\ge 3$ there is a minimum genus embedding of $K_n$ with at most one pentagon and all other faces of size at most $4$.
This proves the result.

\end{proof}

\begin{lemma}\label{lem:embedcomplete}

For each $c\ge 6$, the complete graph $K_{c+1}$ can be embedded in a surface of minimal genus $g$ in a way
that the dual is simple and that there is an edge to which the H-operation can be applied without producing double edges or loops.

\end{lemma}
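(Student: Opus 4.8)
The plan is to work in the dual. Use Lemma~\ref{lem:completedualsimple} to fix a minimum-genus embedding of $K_{c+1}$ whose dual $D:=K_{c+1}^\ast$ is simple, choosing moreover (as the proof of that lemma provides) an embedding all of whose faces have size at most $5$. Thus $D$ is a simple embedded graph of genus $g$ in which every face has size $c\ge 6$, every vertex has degree between $3$ and $5$, and, with $d$ the quantity of Lemma~\ref{lem:bigfaces}, the set $B$ of vertices of $D$ of degree larger than $3$ (equivalently, the non-triangular faces of the embedding) satisfies $\sum_{v\in B}(\deg_D v-3)=d/2\le 5$, so $|B|\le 5$. It then suffices to exhibit a vertex $v_1$ of $D$ all of whose vertices at distance at most $2$ in $D$ have degree $3$: since every face of $D$ has size $c\ge 6\ge 5$ and the dual of $D$ is $K_{c+1}$, which is simple, Lemma~\ref{lem:h-possible} shows that for every edge $e$ of $D$ incident with $v_1$ the graph $H_e(D)$ is simple, i.e.\ the H-operation can be applied to that edge of the dual.

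It remains to find such a $v_1$. If $d=0$ --- which holds exactly when $(c-2)(c-3)\equiv 0\ (\modu 12)$ --- then the embedding of $K_{c+1}$ is a triangulation, $D$ is $3$-regular, and any vertex and any incident edge will do. Suppose $d>0$. Because $D$ has maximum degree at most $5$, the ball of radius $2$ around any vertex has at most $1+5+5\cdot 4=26$ vertices, so the set of vertices of $D$ at distance at most $2$ from $B$ has size at most $26\,|B|\le 130$. By Lemma~\ref{lem:bigfaces}, $D$ has $\tfrac{c^2+c}{3}-\tfrac{d}{6}$ vertices, which exceeds $130$ once $c$ is sufficiently large (say $c\ge 20$); for such $c$ some vertex $v_1$ lies outside that set, and then every vertex within distance $2$ of $v_1$ has degree $3$, as needed. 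The threshold on $c$ can be pushed down by noting in addition that every vertex of $N(B)\setminus B$ has degree $3$ and that $\sum_{v\in B}\deg_D v\le 20$.

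The main obstacle is thus the finitely many small values of $c$ with $d>0$ for which the counting bound does not yet apply --- crudely all $c\le 19$, and only a short list (such as $c\in\{7,8,9,13\}$) after the refinement. For each of these I would finish by an explicit verification: produce, using the chord-exchange freedom from the proof of Lemma~\ref{lem:completedualsimple}, a minimum-genus simple-dual embedding of $K_{c+1}$ in which some triangular face has only triangular faces within dual-distance $2$, and then invoke Lemma~\ref{lem:h-possible}; or simply carry out an exhaustive search over the minimum-genus simple-dual embeddings of $K_{c+1}$ for these few $c$. I expect this bounded case analysis, not the asymptotics, to be the real work of the proof.
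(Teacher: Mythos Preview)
Your approach is essentially the paper's: work in the dual, show by counting that some vertex has only degree-$3$ vertices within distance $2$, then apply Lemma~\ref{lem:h-possible}. The paper uses the slightly sharper blocking bound $3s(f')+1$ per non-triangle face (so $\sum(3s(f')+1)=\tfrac{3d}{2}+10|F_L|\le \tfrac{13d}{2}\le 65$), which already settles $c\ge 14$; the residual cases are exactly your list $\{7,8,9,13\}$, and the paper disposes of $c\in\{8,9,13\}$ by exhibiting in the appendix minimum-genus simple-dual embeddings with a \emph{single} non-triangle face (which tightens the blocking bound enough), while for $c=7$ no such embedding exists and an explicit embedding together with a specific admissible edge is given.
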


\begin{proof}

Let $K_{c+1}$ be embedded with minimal genus in a way that the dual is simple.
  We want to prove that in the dual, there is a vertex $v$ with only vertices of degree $3$ at distance
  at most $2$ to $v$. Because vertices with degree $3$ in the dual are triangles
  in the primal graph,
  we will discuss triangles in the primal graph.

With the notation of Lemma~\ref{lem:bigfaces},
each $f'\in F_L$ is a vertex in the dual with degree $s(f')$.
We say that a vertex $f$ is {\em blocked}
by a vertex $f'\in F_L$  if  (in the dual) $d(f,f')\le 2$ and $d(f,f')$ is minimal among all vertices $f'\in F_L$.
A vertex $f'$ of degree $s(f')$ can block at most $3s(f')+1$ vertices (including itself).
All vertices in $F_L$ together can block at most

$\sum_{f'\in F_L} (3s(f')+1)= 3\sum_{f'\in F_L} (s(f')-3)+ 10|F_L|=  \frac{3d}{2}+ 10|F_L|$

vertices. As, for $f'\in F_L$, we have $s(f')-3\ge 1$, this implies $|F_L|\le  \frac{d}{2}$ and the number $b_l$ of blocked vertices
in the dual is at most $\frac{13d}{2}$.

If a vertex is not blocked, we can apply the H-operation to any edge incident with it without creating
double edges or loops (Lemma~\ref{lem:h-possible}).

Since $d$ is always even, we have $d\le 10$, so, for $c\ge 14$, we get with Lemma~\ref{lem:bigfaces}

$f= \frac{c^2}{3} + \frac{c}{3} - \frac{d}{6}\ge \frac{410}{6} > \frac{130}{2} \ge  \frac{13d}{2}\ge b_l,$

hence, for $c\ge 14$, there is always a vertex so that we can apply the H-operation to each incident edge.

For $c<14$, consider the following table:

\begin{tabular}{c|c|c|c|c}
$c$ & $d$ & $f$ & upper bound & $b_l$ for just one \\
&    &    &   for $b_l$ & face not a triangle \\
\hline
13 & 10 & 59 & 65 & 25 \\
12 & 6 &   51  & 39 & 19 \\
11 & 0 & 44 & 0 & 0 \\
10 & 4 & 36 & 26 & 16 \\
9  &  6 & 29 & 39 & 19 \\
8 & 6 & 23 & 39 & 19 \\
7 & 4 & 18 & 26 & 16 \\
6 & 0 & 14 & 0 & 0 \\

\end{tabular}

For $c\in \{6,10,11,12\}$, the dual of each embedding has a vertex that is not blocked.
For $c\in \{7,8,9,13\}$, we can only draw this conclusion for an embedding with only one face
that is not a triangle. In the appendix, we give such embeddings with a simple dual for $K_{c+1}$
with $c\in \{8,9,13\}$ to show that they exist. For $c=7$, such an embedding does not exist,
but we give an embedding and an edge to which the H-operation can be applied.

\end{proof}

\section{Bounds and exact values for \boldmath$\delta_k(c)$}

For $k\ge 3$, it is easy to determine the values of $\delta_k()$,
because simple graphs can contain triangles which imply 3-cuts in the dual, but also some other exact values and bounds can now be
determined:

\begin{theorem}\label{lem:somevalues}

\begin{description}
\item[(a)] $\delta_k(c)\ge \left\lceil \frac{(c-2)(c-3)}{12}\right\rceil$ for $c\ge k\ge1$, $c>5$.
\item[(b)] $\delta_1(1)=0$, $\delta_1(2)=\delta_1(3)=1$, $\delta_1(4)=\delta_1(5)=2, \delta_1(6)= 3$.
\item[(c)] $\delta_2(2)=0$, $\delta_2(3)=\delta_2(4)=\delta_2(5)=1$, $\delta_2(6)=2$.  \\
For  $c\ge 7$: $\delta_2(c)\in \{  \left\lceil \frac{(c-2)(c-3)}{12}\right\rceil,    \left\lceil \frac{(c-2)(c-3)}{12}\right\rceil +1 \}$,
\item[(d)] For $k\ge 3$ we have: \\
If $c\in \{3,4,5\}$ and $c\ge k$, then $\delta_k(c)=0$.\\
If $c>5$ and $c\ge k$ then $\delta_k(c)=\left\lceil \frac{(c-2)(c-3)}{12}\right\rceil$.
\end{description}

\end{theorem}

\begin{proof}

(a) The value $\left\lceil \frac{(c-2)(c-3)}{12}\right\rceil$ is on one hand the genus of the complete graph
$K_{c+1}$ \cite{genus_complete_graph}, and on the other the smallest genus on which any graph with minimum degree $c$
can be embedded, so that for $s< \left\lceil \frac{(c-2)(c-3)}{12}\right\rceil$ no $c$-connected
graph embedded in a surface of genus $s$ can exist -- no matter what the structure of the dual is.

(b) The case $\delta_1(1)=0$ is trivial, and for $c\in \{2,3\}$ the well known fact that simple $c$-connected plane
graphs have a simple $c$-connected dual implies $\delta_1(c)\ge 1$. The 3-connected graph embedded in
the torus and
displayed in Figure~\ref{fig:torus_3_1} shows  $\delta_1(2)=\delta_1(3)=1$.
The graph in Figure~\ref{fig:doubletorus_5_1} shows $\delta_1(4)\le 2$, $\delta_1(5)\le 2$,
while equality follows with Lemma~\ref{lem:existcut}, part (a).
The graph in Figure~\ref{fig:tripletorus_6_1}
shows $\delta_1(6)\le 3$ and again equality follows with Lemma~\ref{lem:existcut}, part (a).

(c) The case $\delta_2(2)=0$ is trivial, and for $c\in \{3,4,5\}$ the fact that simple 3-connected plane
graphs have a simple 3-connected dual implies $\delta_2(c)\ge 1$.
Applying Note~\ref{note:h-operation} to the dodecahedron embedded in the plane
and with dual the 5-connected icosahedron implies --
together with Lemma~\ref{lem:h-possible} -- $\delta_2(3)=\delta_2(4)=\delta_2(5)=1$.

Lemma~\ref{lem:existcut}, part (b) implies that $\delta_2(6)>1$, and applying
Note~\ref{note:h-operation} and  Lemma~\ref{lem:h-possible}
to an edge of the Heawood graph embedded in the torus (with dual the
6-connected graph $K_7$), we get $\delta_2(6)=2$.

Already in part (a) we showed that  $\delta_2(c)\ge \left\lceil \frac{(c-2)(c-3)}{12}\right\rceil$.
Applying Note~\ref{note:h-operation} to a suitable edge of the dual of
$K_{c+1}$ embedded in a surface of genus $\left\lceil \frac{(c-2)(c-3)}{12}\right\rceil$, and applying
Lemma~\ref{lem:embedcomplete}, shows
that $\delta_2(c)\le \left\lceil \frac{(c-2)(c-3)}{12}\right\rceil+1$.

(d) For $c\in \{3,4,5\}$ the icosahedron shows that $\delta_k(c)=0$.
For $c>5$ the result follows directly from Lemma~\ref{lem:completedualsimple},
since for $c>5$ embeddings of $K_{c+1}$ on a surface
of genus $\left\lceil \frac{(c-2)(c-3)}{12}\right\rceil$ exist that have a simple dual and triangles.

\end{proof}

The remaining -- and most interesting -- values are $\delta_1(c)$ for $c>6$ and the exact
values of $\delta_2(c)$ for $c>6$. We will not be able to decide which of the two possible
values for  $\delta_2(c)$ is the correct one, but we will be able to achieve some progress on
the problem for $\delta_1(c)$ by giving an upper bound on $\delta_1(c)$.
As the definition of $\delta_k(c)$ requires $G$ to be $c$-connected, it follows directly from the Euler formula that
$\delta_1(c)\in \Omega(c^2)$, so at least in the Omega-notation, the bound we will prove
will be optimal.

\begin{figure}[h!t]
	\centering
	\includegraphics[width=0.4\textwidth]{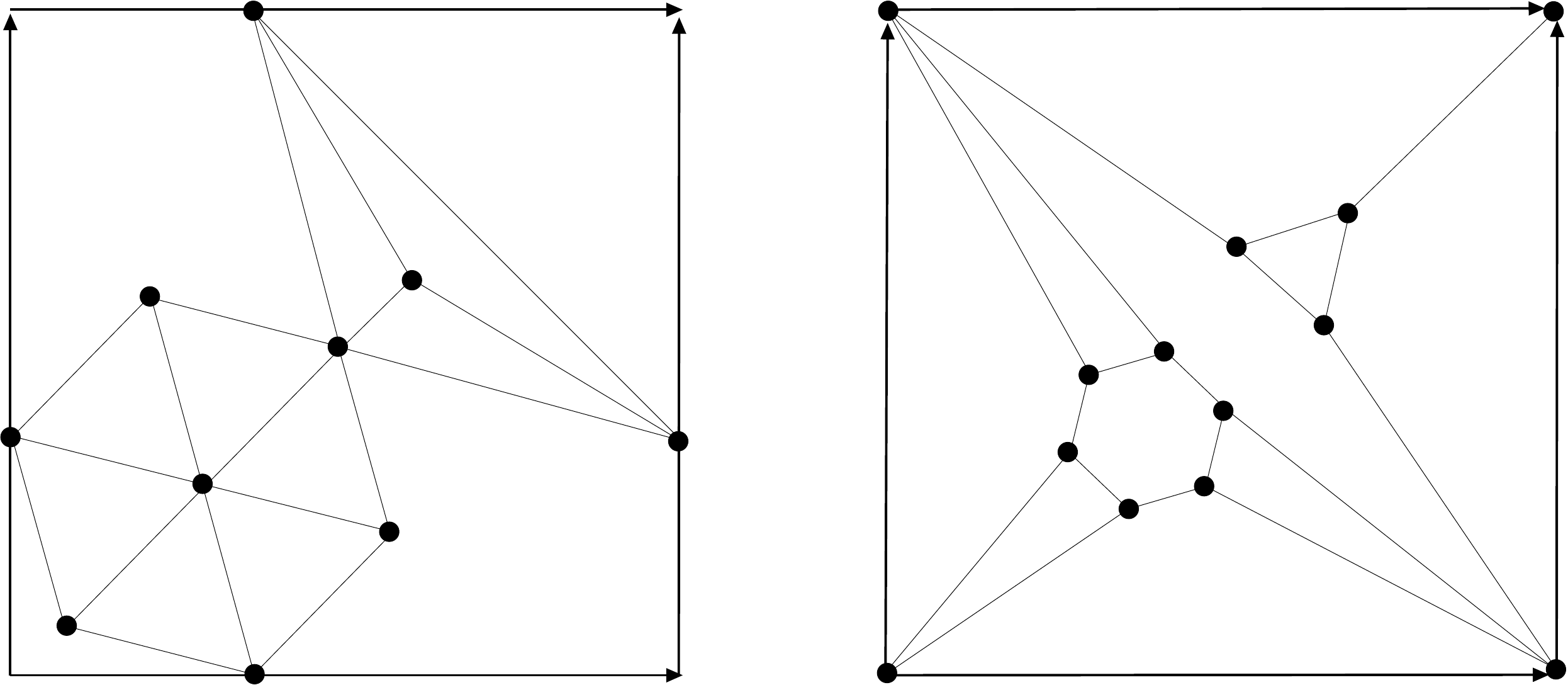}
	\caption{A 3-connected graph on the torus (left) with a dual that has a 1-cut (right).}
	\label{fig:torus_3_1}
\end{figure}

% \begin{figure}[h!t]
% 	\centering
% 	\includegraphics[width=0.4\textwidth]{torus_4_2_new.pdf}
% 	\caption{A 4-connected graph on the torus (right) with a dual that has a 2-cut (left).}
% 	\label{fig:torus_4_2}
% \end{figure}

\begin{figure}[h!t]
	\centering
	\includegraphics[width=0.6\textwidth]{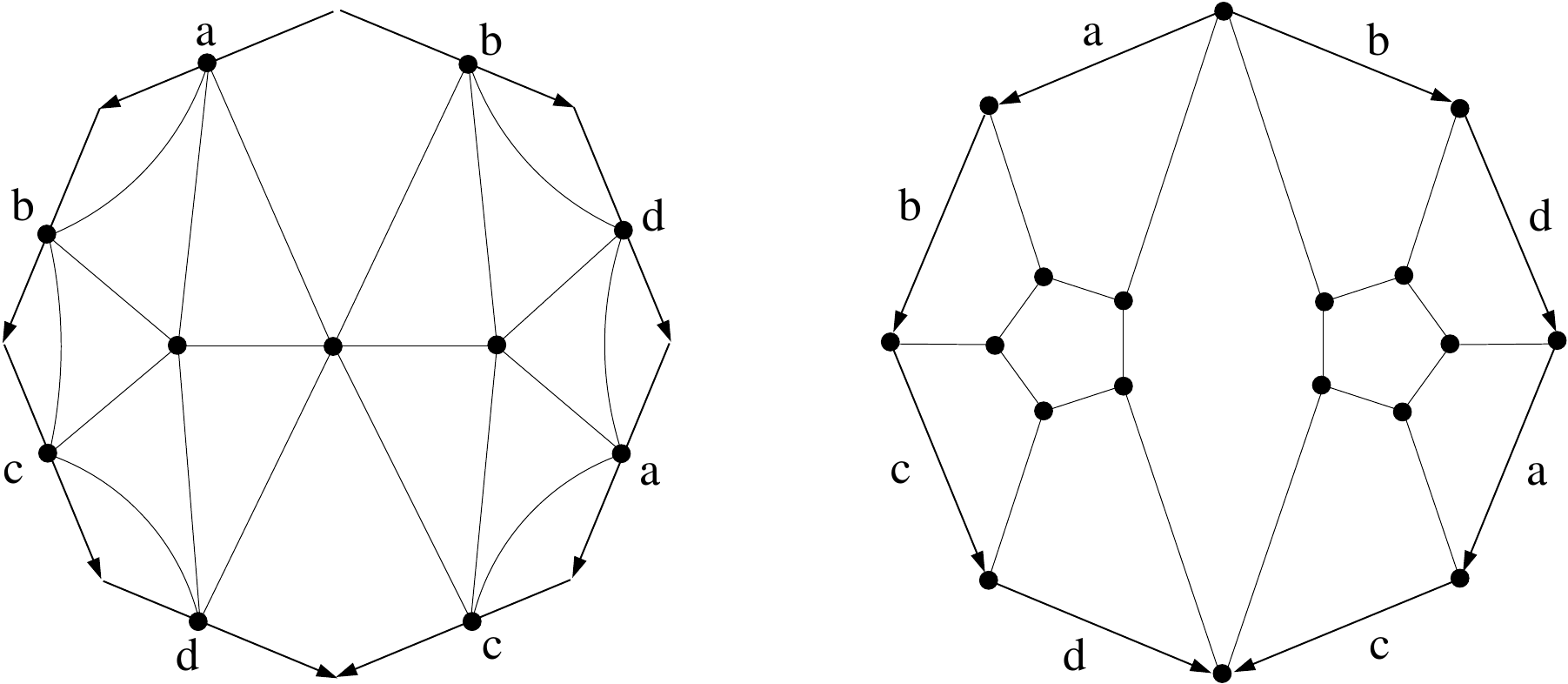}
	\caption{The 5-connected graph $K_7$ minus an edge embedded in the double torus (left) so that the dual has a 1-cut (right).}
	\label{fig:doubletorus_5_1}
\end{figure}

\begin{figure}[h!t]
	\centering
	\includegraphics[width=0.6\textwidth]{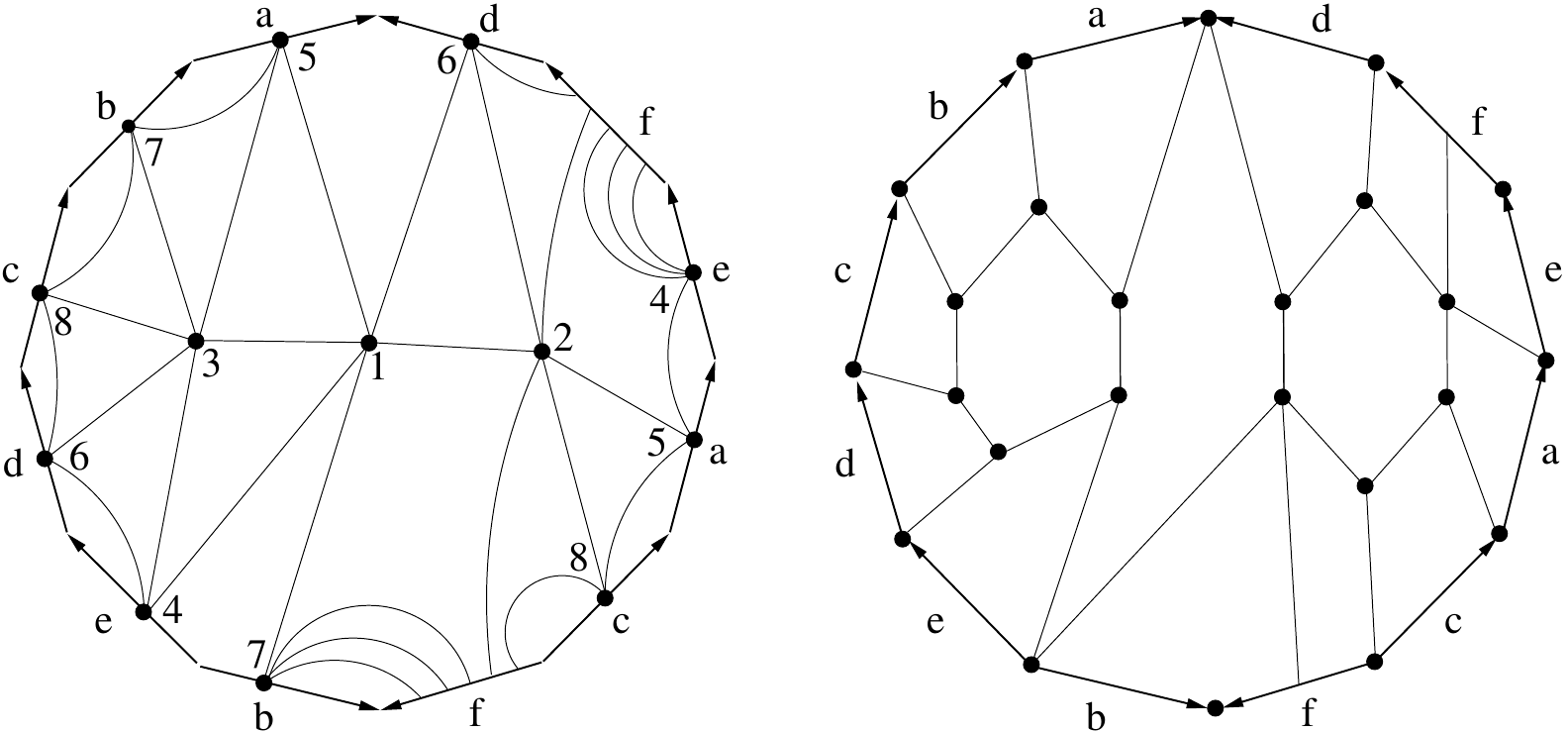}
	\caption{The 6-connected graph $K_8$ minus a matching with three edges embedded in the triple torus (left)
so that the dual (right) has a 1-cut.}
	\label{fig:tripletorus_6_1}
\end{figure}

\begin{theorem}

For $c\ge 7$, we have
$\delta_1(c)\le  \frac{c^2+6c-5}{4}$.

\end{theorem}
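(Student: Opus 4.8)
The plan is to build, for each $c\ge 7$, a simple $c$-connected graph $G$ with an embedding of genus at most $\frac{c^2+6c-5}{4}$ whose dual $G^\ast$ is simple and has a cut vertex; by definition this gives $\delta_1(c)\le\frac{c^2+6c-5}{4}$. The starting point is a structural observation about any such witness. If a face $f$ is a cut vertex of $G^\ast$, then $f$ cannot be a \emph{simple} face: if $\partial f$ were a cycle, then the faces on the far sides of consecutive boundary edges of $f$ would share the common vertex, form a connected cyclic chain in $G^\ast-f$, and every other face reaches this chain, so $G^\ast-f$ would stay connected. Moreover, the vertex set $V(\partial f)$ of the boundary walk is a vertex cut of $G$ (a component of $G-V(\partial f)$ only ever meets faces of one part of $G^\ast-f$), so $|V(\partial f)|\ge c$. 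Hence $f$ must be a \emph{large, non-simple} face whose removal from $G^\ast$ splits it into pieces sitting in essentially disjoint parts of the surface, and the construction realises exactly this picture.

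Concretely, I would take a bounded number (two, or three if two do not suffice) of copies $H_1,H_2,\dots$ of $K_{c+1}$ sharing a common clique $Z$ on $c$ vertices — as a graph, two copies give $K_{c+2}-e$, which is $c$-connected with $Z$ its only $c$-cut — and embed the petals in essentially independent pieces of the surface. Each petal I embed so that its dual is simple and all its faces are triangles except for a bounded defect $\sum_{f'}(s(f')-3)=d/2\le 5$, using Lemma~\ref{lem:completedualsimple} and Lemma~\ref{lem:bigfaces}; then, by a few chord exchanges as in the proof of Lemma~\ref{lem:completedualsimple} (enlarging one face to a Hamiltonian-type face through all of $Z$ costs only $O(1)$ extra handles per petal), I arrange a face $A_i$ of $H_i$ whose boundary is a cycle through all of $Z$ and which meets no other face of $H_i$ in two edges. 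I then amalgamate the petals by excising the disks $A_i$, joining their boundary cycles across a handle carrying a gadget of new vertices and edges designed so that: (i) the handle region becomes a single face $f$ whose boundary sweeps once along each $\partial A_i$ and revisits the vertices of $Z$, so $V(\partial f)\supseteq Z$ and $G$ stays $c$-connected; (ii) no loop, no parallel edge, and no pair of faces sharing two edges is created, so $G$ and $G^\ast$ stay simple; (iii) every edge off $\partial f$ has both incident faces inside a single petal, so $G^\ast-f$ falls apart into the petal duals and $\{f\}$ is a cut vertex of $G^\ast$. Finally, Euler's formula on $G$ finishes the estimate: the petals contribute about $2\lceil\frac{(c-2)(c-3)}{12}\rceil$ (respectively about $\frac{c^2-5c+6}{4}$ with three petals), the gadget and the merging of the $A_i$ add $O(c)$ further handles, and absorbing the ceilings and the bounded defect ($d\le 10$) the genus is at most $\frac{c^2+6c-5}{4}$.

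The main obstacle is the gadget in the amalgamation step. There is a genuine three-way tension: making $\{f\}$ a $1$-cut of $G^\ast$ wants the dual-connection between petals routed through a single face, which pushes toward a thin connection in $G$; but a single connecting edge produces a loop in $G^\ast$ and two connecting edges a double edge in $G^\ast$; and $c$-connectivity forbids a thin connection in $G$, forcing $V(\partial f)$ to have $\ge c$ vertices, so $f$ must be a large non-simple face whose boundary sweeps the entire shared clique from each side. Engineering the handle gadget so that it is simultaneously wide enough (for connectivity), non-degenerate (for a simple dual — this is where the careful choice of the $A_i$ and the $d\le 10$ face-size control of Lemma~\ref{lem:bigfaces} are used), and free of any ``shortcut'' face bridging two petals other than $f$ (for the $1$-cut), together with the bookkeeping confirming that exactly this forces the genus up to $\Theta(c^2)$ rather than the $\Theta(c^2/12)$ of a single $K_{c+1}$, is the technical heart of the argument.
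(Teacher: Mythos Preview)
This is a plan, not a proof, and two of its load-bearing claims fail. First, ``enlarging one face to a Hamiltonian-type face through all of $Z$ costs only $O(1)$ extra handles'' is false: chord exchanges preserve both the genus \emph{and} the number of faces, but any embedding of $K_{c+1}$ with a face of size $\ge c$ (and all faces of size $\ge 3$) has at most $\tfrac{c^2+3}{3}$ faces, hence genus at least $\tfrac{c(c-3)}{12}$, which exceeds the minimum genus $\lceil\tfrac{(c-2)(c-3)}{12}\rceil$ by $\tfrac{c-3}{6}+O(1)=\Theta(c)$. So no sequence of chord exchanges starting from a minimum-genus embedding can produce your $A_i$. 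Second, and more seriously, the amalgamation is inconsistent with the target graph. If the petals genuinely share the full clique $Z\cong K_c$, then each $z\in Z$ is a single point of the final embedding whose rotation must interleave edges of \emph{both} petals; the petals cannot sit in ``essentially independent pieces of the surface'' joined only along $\partial A_i$. If instead you embed the petals on disjoint vertex sets and glue only along the boundary cycles, the $\binom{c}{2}-c$ edges of $Z$ not on those cycles are duplicated and the result is a multigraph, not $K_{c+2}-e$. Excising two disks and attaching a handle cannot identify a $K_c$ that is spread through the interiors of two separate surfaces. On top of this, the gadget itself --- which you call the technical heart --- is never constructed, and its three desiderata (wide enough for $c$-connectivity, thin enough that only one face bridges the petals, yet causing no double edges in $G^\ast$) are exactly the tension you have not resolved.

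The paper sidesteps all of this by working with complete \emph{bipartite} graphs, where the shared set is independent and no edge-duplication can arise. It takes Ringel's minimum-genus quadrangular embedding of $K_{p,q}$ (with $p\ge c$ odd and $q\ge\tfrac{c}{2}+1$, $q\equiv 2\pmod 4$), deletes one vertex on the $q$-side --- which for free produces a single large face whose boundary already runs through all of $V_p$ --- and adds a few explicit edges so that no quadrangle shares two edges with that large face (keeping the dual simple). Two such copies are then identified along $V_p$ via an explicit cyclic shift by $2$. Because $V_p$ is independent, no multi-edges appear; because a spanning $K_{p,2(q-1)}$ sits inside, the result is $c$-connected; and because the two large faces merge into a single face through which every dual path between the halves must pass, that face is a cut vertex of the simple dual. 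The genus is computed exactly as $\tfrac{(p-2)(q-1)+1}{2}$, and the crude bounds $p\le c+1$, $q\le \tfrac{c+9}{2}$ then give $\delta_1(c)\le\tfrac{c^2+6c-5}{4}$.
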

\begin{proof}

Let $c\ge 7$, $p\ge c$ minimal with the property that $p$ is odd, and let
$q\ge \frac{c}{2}+1$ minimal with the property $q \equiv 2\;\; (\modu 4)$
-- so $q\ge 6$. We will define an embedding of a graph
$G$ containing $K_{p,2(q-1)}$ as a spanning subgraph, so that the dual is simple and has a $1$-cut.
As $K_{p,2(q-1)}$ is a spanning subgraph with $c'=\min\{p,2(q-1)\}$, we have that $G$ is
$c'$-connected and therefore also $c$-connected.

%\begin{figure}[h!t]
%	\centering
%	\includegraphics[width=\textwidth]{odd_case.pdf}
%	\caption{An embedding of $K_{p,q}$ with $p$ odd and $q\equiv2 (\modu 4)$.}
%	\label{fig:oddcase}
%\end{figure}

\begin{figure}[h!t]
	\centering
	\includegraphics[width=0.8\textwidth]{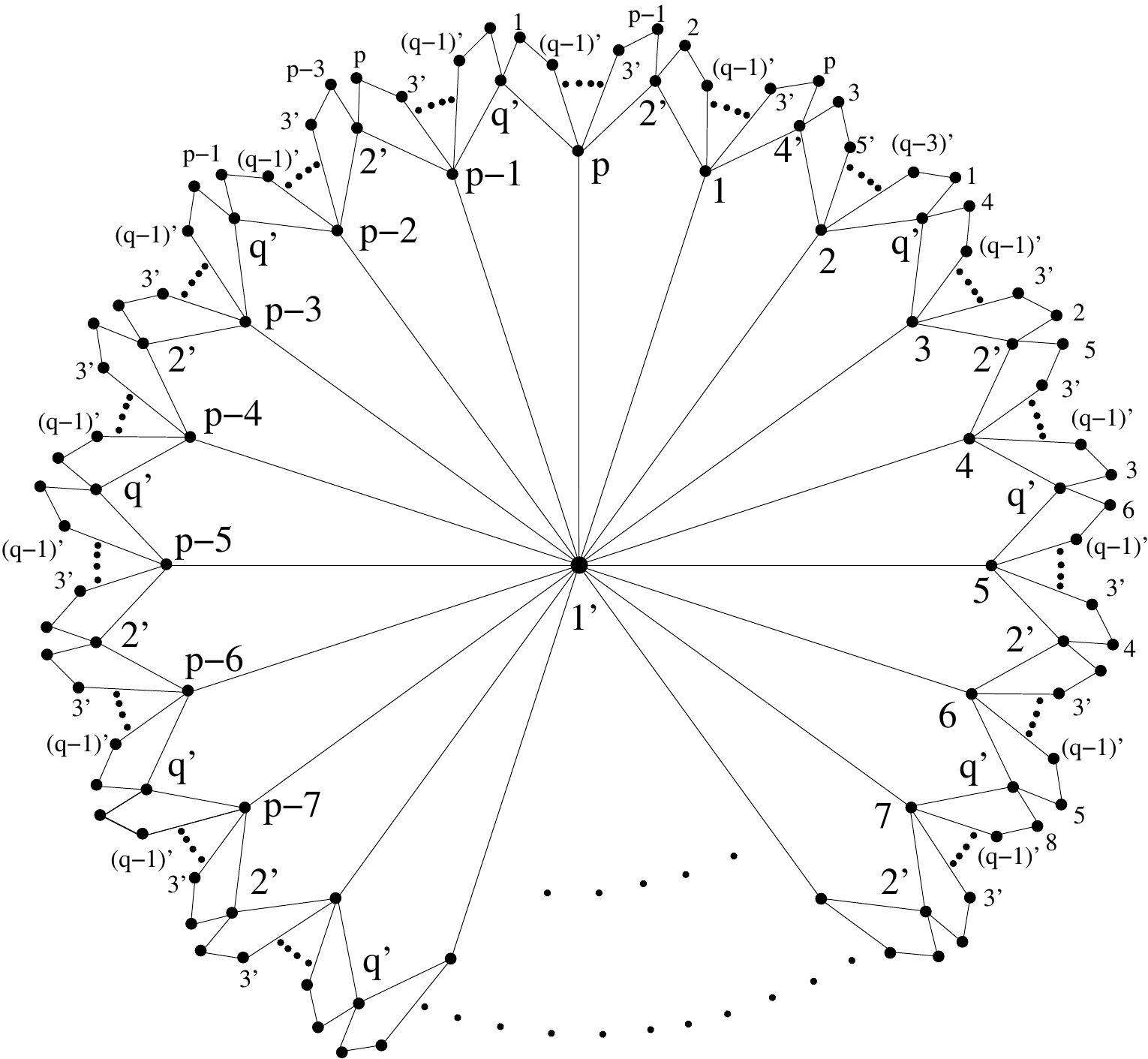}
	\caption{An embedding of $K_{p,q}$ with $p$ odd and $q\equiv 2 \;\; (\modu 4)$.}
	\label{fig:oddcase_1}
\end{figure}

In Figure~\ref{fig:oddcase_1}, a part of a minimum genus embedding of $K_{p,q}$ as
described by Ringel \cite{Ringel1965} is displayed. If the vertex bipartition is
$V_p=\{1,2,\dots ,p\},V_q=\{1',2',\dots ,q'\}$,
then the cyclic order around the vertices
given by Ringel is

%\pagebreak[1]
\begin{tabular}{ll}
For $1\in V_p$: & $(q-1)',q',(q-3)',(q-2)',\dots ,3',4',1',2'$ \\
 & (alternating index differences $+1$ and $-3$).\\
For $2\in V_p$: & $2',3', 6',7',\dots ,q',1',\dots  ,(q-5)',(q-2)',(q-1)'$\\
 & (alternating index differences $+1$ and $+3$ \\
 & and replacing $(q+1)'$ by $1'$). \\
For odd $i\in V_p, i\ge 3$: & $q', (q-1)', \dots ,1'$.\\
For even $i\in V_p, i\ge 3$: & $1', 2', \dots ,q'$.\\
For odd $i'\in V_q$: & $1, 2, \dots ,p$.\\
For even $i'\in V_q$: &  $p, p-1, \dots ,1$.\\
\end{tabular}

For $p$ odd and $q\equiv 2\;\; (\modu 4)$, the genus is equal to $\frac{(p-2)(q-2)}{4}$
and all faces are quadrangles \cite{Ringel1965}, so (see the proof of Lemma~\ref{lem:completedualsimple})
no two faces can share more than one edge
and the dual is simple.

Removing vertex $1'\in V_q$,
we get one big face with all vertices of $V_p$ in the boundary. In this new embedded graph $G_1$, some
of the old quadrangles share 2 edges with the new, large face. The pattern in which faces occur two
times in the boundary can be described based on the rotation system, but can best be seen
in Figure~\ref{fig:oddcase_2}. In order to make sure that each face shares only one
edge with another face, we construct the graph $G_2$ by adding
edges $\{4k-1,4k\}$ and $\{4k,4k+1\}$,
for $1\le k\le \frac{p}{4}$, and
an additional edge $\{p,1\}$, if $p\equiv 3 \;\; (\modu 4)$.

\begin{figure}[h!t]
	\centering
	\includegraphics[width=0.75\textwidth]{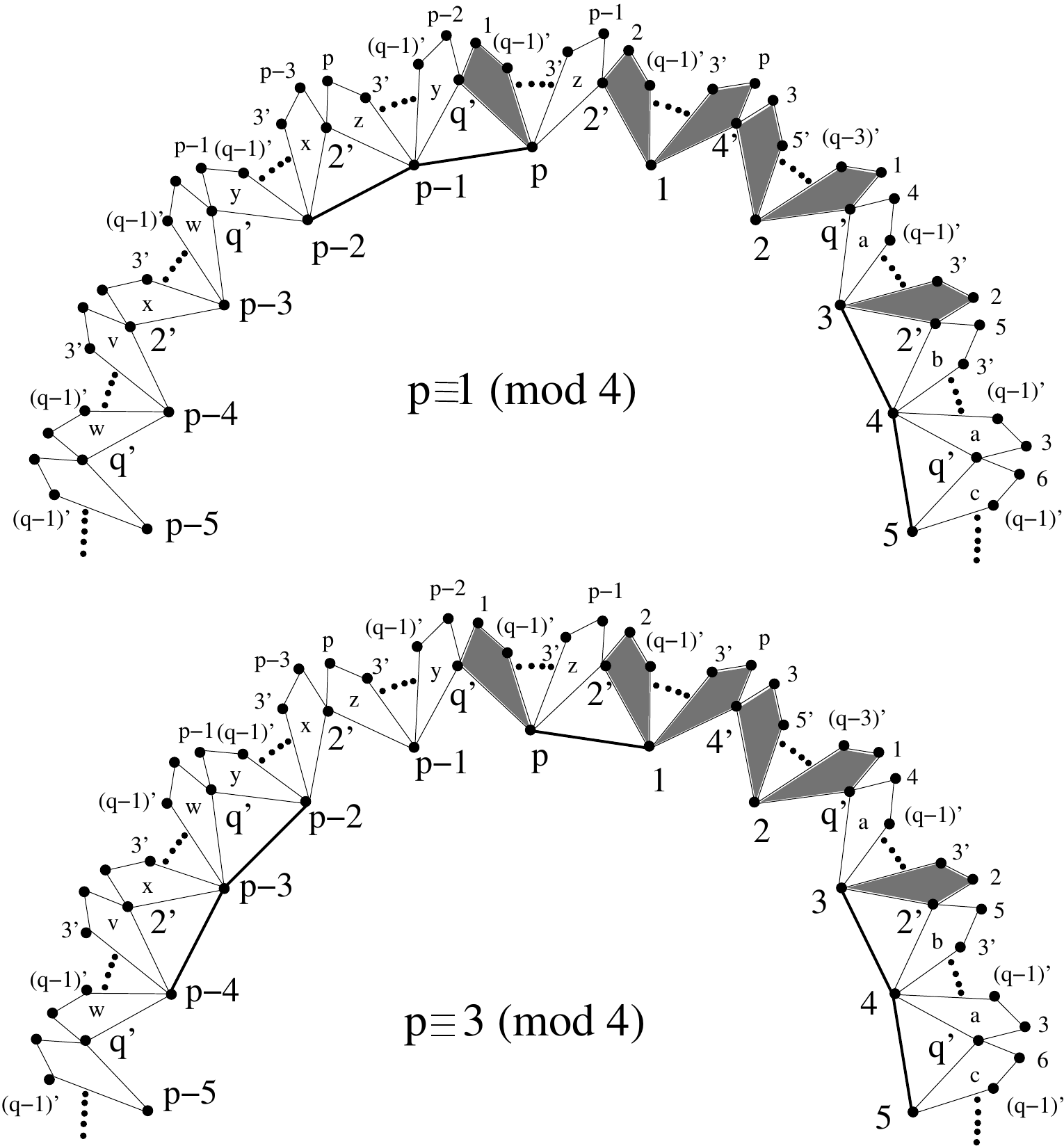}
	\caption{Quadrangles sharing just one edge with the large face are shaded. The other quadrangles are assigned letters
to indicate the two places where they occur in the boundary of the large face. The graph $G_2$ is formed by
adding new edges to avoid that a quadrangle shares more than one edge with the large face.}
	\label{fig:oddcase_2}
\end{figure}

$G_1$ is isomorphic to $K_{p,(q-1)}$. Taking two copies $G^a_{1},G^b_1$
of $G_1$ and identifying the vertex sets
$V^a_p, V^b_p$ with $p$ vertices in any way using a bijection, we get $K_{p,2(q-1)}$.
Doing the same with copies $G^a_2=(V^a_p\cup V^a_q,E^a)$ and
$G^b_2=(V^b_p\cup V^b_q,E^b)$ of $G_2$ we get a graph that contains
$K_{p,2(q-1)}$ as a spanning subgraph, so it is at least $c$-connected. Since in $G^a_2$ and $G^b_2$, some vertices in $V^a_p$ respectively $V^b_p$ are
adjacent, and because we neither
want to delete edges, nor create double edges, we will have to identify $V^a_p$ and $V^b_p$
in a way that no two vertices in $V^a_p$
that are adjacent in $G^a_2$ are identified with vertices adjacent in $G^b_2$.

\begin{figure}[h!t]
	\centering
	\includegraphics[width=0.5\textwidth]{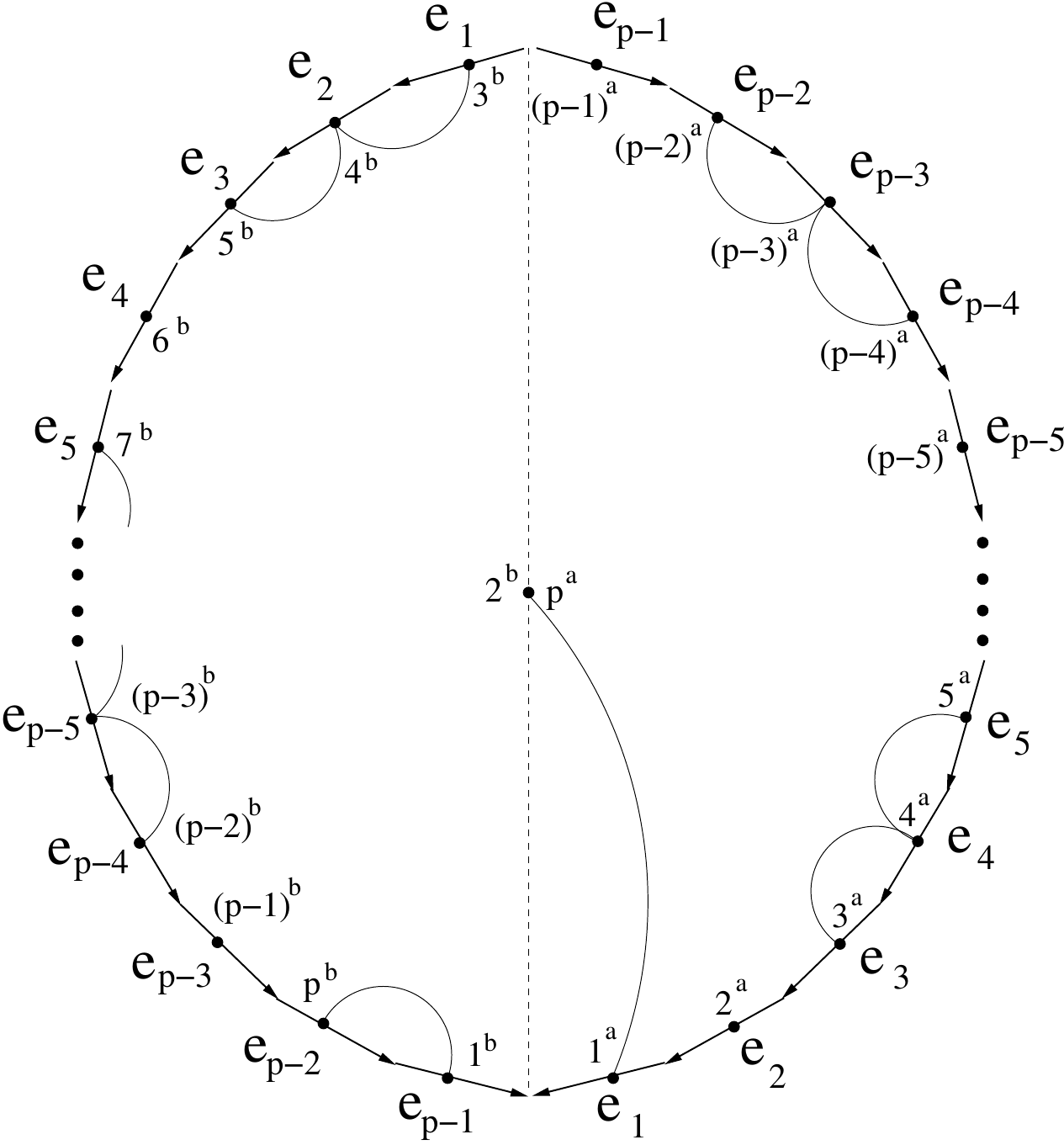}
	\caption{A fundamental polygon split into two parts and with instructions on how $G^a_2$ and $G^b_2$ are
embedded into the two parts. The identification along the boundary is described
by labelling arrows to be identified by the same symbol $e_i$. The gluing into the polygon is described by giving the
positions of the vertices $i^a$, respectively $i^b$. The extra edges not belonging to $K_{p,q-1}$ are given.}
	\label{fig:oddcase_fundpol}
\end{figure}

Denoting the vertices in $V^b_p$ as $1^b, 2^b, \dots$
in order to distinguish them from vertices in $V^a_p$,
which we denote as $1^a, 2^a, \dots$,
we identify, for $1\le i\le p-2$, vertex $i^a$ with vertex $(i+2)^b$, vertex $(p-1)^a$ with
$1^b$ and vertex $p^a$ with $2^b$. The rotation  around the vertices is given by
adding the edges coming from the other graph in the formerly large face obtained by removing vertex $1' \in V_q$.
This identification is displayed in Figure~\ref{fig:oddcase_fundpol}, where, for the case
$p\equiv 3 \;\; (\modu 4)$, also the edges between vertices
of $V^a_p$ and the edges between vertices of $V^b_p$
are drawn to show that no double edges exist. It is easy to check that this also
holds for $p\equiv 1 \;\; (\modu 4)$.

With $s(e)$ denoting the starting point of an arrow and $t(e)$ denoting the endpoint, for $1\le i \le p-2$, the right hand side of the
fundamental polygon gives $s(e_{i})=t(e_{i+1})$ and the left hand side gives $t(e_{i+1})=s(e_{i+2})$,
for $0\le i \le p-3$.
Together this gives $s(e_{i})=s(e_{i+2})$, for $1\le i \le p-3$. This means that all starting points of arrows with odd
index are the same and all starting points of arrows with even index are the same. Together with
$s(e_1)=s(e_{p-1})$ (note that $p-1$ is even) and $t(e_{p-1})=t(e_1)=s(e_2)$,
this gives that all start- and endpoints of arrows in the fundamental polygon correspond to the same point.
We get exactly one face that is not one of the triangles or quadrangles contained in $G^a_2$ and $G^b_2$.

The same conclusion can also be obtained without use of the fundamental polygon in
Figure~\ref{fig:oddcase_fundpol} and arguing only with the rotation  around the vertices.

In order to compute the genus of the graph, we can neglect the edges added after removing the vertex $1'\in V_q$
and compute the genus of the graph without these edges: each of the edges subdivides a face, so we have one
more edge and one more face, and the Euler characteristic does not change.

The embedding of $K_{p,q}$ is a minimum genus embedding with all faces quadrangles,
so it has $p+q$ vertices, $pq$ edges and $(pq)/2$ faces. After removing vertex $1'$, the graph has
$p+q-1$ vertices, $pq-p$ edges and $(pq)/2-p+1$ faces.
If $G_3$ is the result of identifying the vertices, $G_3$ has $v(G_3)=2(p+q-1)-p$ vertices,
$e(G_3)=2(pq-p)$ edges and, because during the identification
the two large
faces are replaced by one new face, $f(G_3)=  2((pq)/2-p+1)-2+1=pq-2p+1$ faces.
The genus of the resulting graph $G_3$ equals

%\[ 2g(G_3)=2-2(p+q-1)+p+2(pq-p)-pq+2p-1\]

%\[ g(G_3)=  1-(p+q-1)+p/2+pq-p- \frac{pq}{2} +p-1/2= \frac{(p-2)(q-1)+1}{2} \]

\[ g(G_3)=  \frac{(p-2)(q-1)+1}{2}. \]

Because all edges in $G_3$ have one of the small faces in $G^a_2$ and $G^b_2$ on one side, no
two faces share more than one edge and the dual is a simple graph. All paths between vertices
in the dual corresponding to small faces in different copies must pass through
the new large face --
so the vertex corresponding to the new face is a cut-vertex of the dual graph.

Since we have chosen $p$ and $q$ minimal, we have $c \le p\le c+1$ and $c/2+1\le q \le c/2+\frac{9}{2}$.
As $q>1$, we have

\[\delta_1(c)\le g(G_3) \le \frac{c^2+6c-5}{4}.\]

\end{proof}

\section{The uniqueness of graphs with high connectivity and small genus}

A key to investigate, for which $c$
we have $\delta_2(c)=\left\lceil \frac{(c-2)(c-3)}{12}\right\rceil+1,$
and, for which $c$, we have $\delta_2(c)=\left\lceil \frac{(c-2)(c-3)}{12}\right\rceil$,
is provided by
Plummer and Zha \cite{g-unique}. Their Theorem~2.4~(A) states

\begin{theorem}\label{thm:plumzha}
Suppose $c\ge 7$ and let $g$ be the genus of the complete graph $K_{c+1}$.
Then $K_{c+1}$ is the only $c$-connected graph that has an embedding of
genus $g$ if and only if $c\not\in \{7,8,9,10,12,13,16\}$.
\end{theorem}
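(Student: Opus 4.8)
The plan is to prove the two implications separately: one is a short counting argument and the other requires explicit constructions. For the direction ``$c\notin\{7,8,9,10,12,13,16\}\Rightarrow K_{c+1}$ is unique'', I would take an arbitrary $c$-connected simple graph $G$ on $n$ vertices and $m$ edges embedded in a surface of genus $g=\lceil(c-2)(c-3)/12\rceil$ and bound $n$ from above. Since $G$ is $c$-connected, $n\ge c+1$ and the minimum degree is at least $c$, so $m\ge cn/2$; as every face has size at least $3$, the number $f$ of faces satisfies $3f\le 2m$, and Euler's formula $n-m+f=2-2g$ gives $2-2g\le n-m/3\le n(6-c)/6$, i.e.\ $n\le 12(g-1)/(c-6)$ since $c>6$. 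The key observation is the identity $(c-2)(c-3)-12=(c-6)(c+1)$: writing $(c-2)(c-3)=12q+r$ with $0\le r\le 11$, one has $g=q$ when $r=0$ and $g=q+1$ otherwise, so that $12(g-1)-(c-6)(c+1)$ equals $0$ when $r=0$ and $12-r$ otherwise, giving $n\le (c+1)+(12-r)/(c-6)$ (and $n\le c+1$ when $r=0$). A finite check shows that $(12-r)/(c-6)<1$ (or $r=0$) precisely for $c\ge 7$ with $c\notin\{7,8,9,10,12,13,16\}$; for $c\ge 19$ this is automatic since $12-r\le 12<c-6$. For all those $c$ we are forced to $n=c+1$, and a $c$-connected simple graph on $c+1$ vertices has minimum degree $c$ and is therefore $K_{c+1}$.

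For the other direction one must show that, for each $c\in\{7,8,9,10,12,13,16\}$, there is a $c$-connected graph different from $K_{c+1}$ with an embedding of genus $g$. I would first pin down what such a graph can look like: for each of the finitely many admissible orders $n$ with $c+1<n\le(c+1)+(12-r)/(c-6)$, the bounds $\lceil cn/2\rceil\le m\le 3(n-2+2g)$ must both hold, which brackets $m$ so tightly that $G$ is forced to be a complete graph with a sparse subgraph (in the simplest cases a matching) removed, carrying an embedding in which all --- or all but one --- of the faces are triangles, i.e.\ a (near-)triangular embedding of genus exactly $g$. It then remains to exhibit such embeddings for the seven exceptional graphs; these are exactly the objects produced by the current-graph (Ringel--Youngs) machinery --- for instance a triangular genus-$g$ embedding of $K_{c+1}$ minus a perfect matching when $c+1$ is even, and the analogous near-triangular embeddings in the remaining cases. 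Checking that the resulting graphs are genuinely $c$-connected is immediate, since a complete graph minus a matching (or minus any graph of small maximum degree) on few vertices is highly connected.

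The main obstacle is this last step: actually constructing, and verifying the genus of, the seven exceptional embeddings. This is delicate and rests on the same techniques used to determine the genus of the complete graphs and of complete graphs with a matching deleted; it is the content of Plummer and Zha's paper \cite{g-unique}, which is why we quote their theorem rather than reproving it.
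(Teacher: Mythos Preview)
The paper does not give its own proof of this theorem: it is quoted verbatim as Theorem~2.4~(A) of Plummer and Zha~\cite{g-unique}. The paper's sole contribution to the statement is to settle the two cases $c=9$ and $c=13$ that Plummer and Zha had left open, and it does this not by argument but by exhibiting, in the appendix, explicit rotation systems for $K_{11}$ minus a maximum matching (genus~$4$) and $K_{15}$ minus a maximum matching (genus~$10$).

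Your counting argument for the uniqueness direction is correct and is essentially the Plummer--Zha argument; the identity $(c-2)(c-3)-12=(c-6)(c+1)$ is exactly the lever they use, and your residue check recovers their list of exceptions. For the converse you rightly identify that the work lies in producing the seven (families of) embeddings, and you defer to~\cite{g-unique} for this --- which is precisely what the present paper does as well, except that for $c\in\{9,13\}$ the embeddings were not in~\cite{g-unique} and had to be supplied here. So your proposal is accurate as a sketch, but you should be aware that there is no ``paper's own proof'' to compare against: the theorem is imported, and the only new content is the two explicit embeddings in the appendix, which your proposal does not reproduce.
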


In fact in \cite{g-unique}, the uniqueness of
the complete graph for
$c=9$ and $c=13$ is not decided, and
is explicitly posed as an open question.
In the appendix,
we give an embedding of the $9$-connected graph $K_{11}$ minus a maximum matching
with genus $g=g(K_{10})=4$, and an embedding of the $13$-connected graph
$K_{15}$ minus a maximum matching with genus $g=g(K_{14})=10$, showing
that, for these last two cases, the complete graphs are also not unique.
The embeddings were computed by the program described in \cite{genuscomp}.

Theorem~\ref{thm:plumzha} implies that, in order to decide whether
$\delta_2(c)=\left\lceil \frac{(c-2)(c-3)}{12}\right\rceil+1$ or
$\delta_2(c)=\left\lceil \frac{(c-2)(c-3)}{12}\right\rceil$,
it is  -- except for a finite number of exceptions -- sufficient to study only genus embeddings of
complete graphs and decide whether their dual can be a simple graph with a 2-cut.

\section{Conclusions, future work, and further results}

Though the general bounds for higher genus are relevant, it was most
important to solve the problem for the first nontrivial case -- the torus -- completely,
that is,
be able to give exact values for the minimum
connectivities that guarantee $3$-connectivity,
respectively $2$-connectivity
of the dual.

It was also astonishing to see that, if $g(c)$ is the minimum genus
on which a $c$-connected graph can be embedded, already on genus $g(c)+1$ and maybe even on genus $g(c)$,
$c$-connectedness does not guarantee $3$-connectivity of the dual.

The fact that arbitrarily highly connected graphs can even have
a cutvertex in the (simple) dual is also intriguing -- though this may happen only for much higher genus
than the occurrence of $2$-cuts.

Nevertheless there are still many relevant open questions:

\begin{itemize}

\item It would be very interesting to characterize when
$\delta_2(c)=\left\lceil \frac{(c-2)(c-3)}{12}\right\rceil$ and when $\delta_2(c)=\left\lceil \frac{(c-2)(c-3)}{12}\right\rceil+1$.

\item The upper bounds for $\delta_1()$ are very far from the lower bounds.
Using the same techniques as in the proof of the upper bound, a small improvement might be possible
by choosing $p,q$ less generous and also considering the cases for
bipartite graphs when $q \not\equiv 2\;\; (\modu 4)$. For a substantial
improvement of the upper bound or the lower bound, new ideas are necessary.

%\item In this article we focus on orientable surfaces. Analogous results should be possible
%for non-orientable surfaces.

\item In all examples constructed in this article, the embedded graph with high connectivity
can also be embedded with smaller genus -- so it is not minimum genus embedded.
In Figure~\ref{fig:genusembedding_torus_3_2},
we give an example of a minimum genus embedding of a 3-connected graph on the torus
where the dual has a $2$-cut
and is also minimum genus embedded.
So also minimum genus embeddings of graphs with connectivity at least $3$
exist that have a simple dual that is not
$3$-connected, but also minimum genus embedded.
It would be interesting to know which of the results given
are also valid for minimum genus embeddings.

\begin{figure}[h!t]
	\centering
	\includegraphics[width=0.6\textwidth]{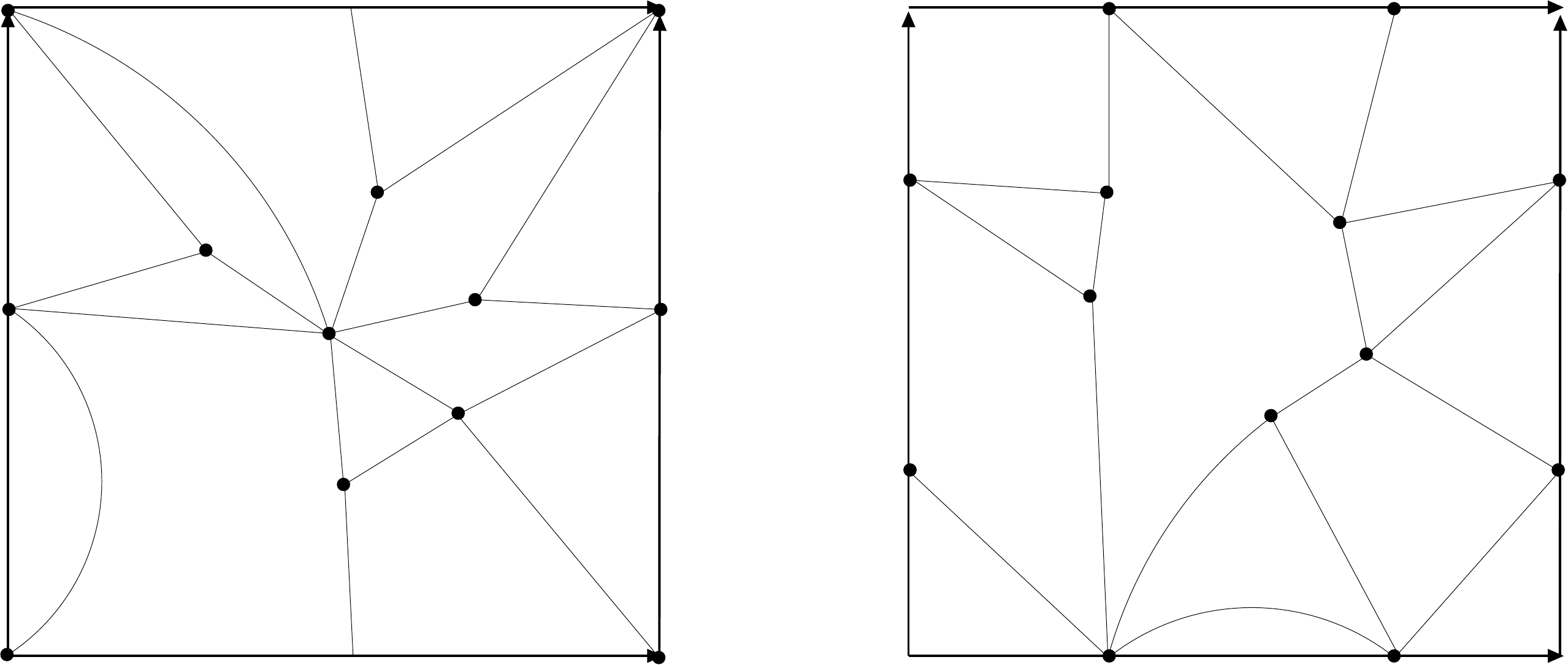}
	\caption{A $3$-connected minimum genus embedded graph on the torus (left)
	with a minimum genus embedded
          dual with a 2-cut (right).}
	\label{fig:genusembedding_torus_3_2}
\end{figure}

\item Due to Whitney's theorem, the statement that a planar 3-connected graph has a planar embedding with a 3-connected dual is
  equivalent to the statement that all its planar embeddings have this property.
For higher genus, the statement that all embeddings have this property is
false, but  does there exist $c\ge 3$ such that, for $c$-connected graphs,
we have that, whenever an embedding with a simple
dual exists, there also exists an embedding with a simple $3$-connected dual of the same genus?

\item In \cite{gocox}, a general approach to local symmetry preserving operations
  (encompassing the dual, truncation, ambo, chamfer, etc.) is described
  and it is proven (Theorem~5.2 in \cite{gocox}) that all operations captured by this approach preserve the
  3-connectedness of polyhedra. In the original manuscript we mentioned the task of extending this result
  to polyhedral embeddings and of classifying operations that always preserve 3-connectedness.
  These aims have in the meantime been achieved.
  In \cite{lopsp-polembed}, the theorem from \cite{gocox} is generalized to polyhedral embeddings and to operations that are only guaranteed to
  preserve orientation preserving symmetries.
  In \cite{Heidi_masterthesis}, a classification of operations
  that always preserve 3-connectedness is given. A publication of this result is in preparation.

%\item Double edges and loops make the results trivial as they allow 1-faces and 2-faces. Allowing
%double edges and loops, but still forbidding 1-faces and 2-faces could make some
%constructions easier and still relevant.

\end{itemize}

\section{Acknowledgement}

Zamfirescu's research was supported by a Postdoctoral Fellowship of the Research Foundation Flanders (FWO).

Bokal's research was supported by Slovenian Research Agency programme P1--0297 and projects J1--8130 and J1--2452.

We want to thank one of the referees for very careful reading and numerous remarks that helped to improve the paper.

%\bibliographystyle{plain}
%\bibliography{/home/gbrinkma/schreib/literatur}\section{Appendix}

\subsection{Embeddings of complete graphs and duals to which the H-operation can be applied}
{\small
\begin{minipage}[t]{4.5cm}
An embedding of $K_8$ \hfill \\
with genus $2$:\hfill \\

\medskip

\begin{tabular} {l|l}
vertex & order of \\
 & neighbours \\
\hline
1 &  2  4  8  3  6  7  5 \\
2 &  1  5  8  7  3  6  4 \\
3 &  1  8  2  7  5  4  6 \\
4 &  1  2  6  3  5  7  8 \\
5 &  2  1  7  4  3  6  8 \\
6 &  5  7  1  3  4  2  8 \\
7 &  6  3  2  8  4  5  1 \\
8 &  7  2  5  6  3  1  4 \\
\end{tabular}
\end{minipage}\hfill
\begin{minipage}[t]{7.5cm}
The dual of the
embedding of $K_8$:\hfill \\

\medskip

\begin{tabular} {l|l||l|l}
vertex & order of & vertex & order of \\
 & neighbours & & neighbours \\
\hline
1 &  2  3  4  & 10 &  4  7  16  \\
2 &  1  5  6  & 11 &  5  16  18  \\
3 &  1  7  8  & 12 &  5  9  17  \\
4 &  1  9  10  & 13 &  6  18  7  \\
5 &  2  11  12  & 14 &  6  9  15  \\
6 &  2  13  14  & 15 &  7  14  18  \\
7 &  3  15  10  13  & 16 &  8  11  10  \\
8 &  3  16  17  & 17 &  8  18  12  \\
9 &  4  12  14  & 18 &  11  13  17  15  \\
\end{tabular}
\end{minipage}

\bigskip

\begin{minipage}[t]{9cm}
The result (genus $3$) of the
H-operation applied to edge $\{1,2\}$ of the dual of the embedding of $K_8$:\hfill \\

\medskip

\begin{tabular} {l|l||l|l}
vertex & order of & vertex & order of \\
 & neighbours & & neighbours \\
\hline
1 &  2  3  4 & 9 &  4  6  12 \\
2 &  1  3  4 & 10 &  4  7  13 \\
3 &  1  5  6  2  7  8 & 11 &  4  14  7 \\
4 &  1  9  10  2  11  12 & 12 &  4  9  16 \\
5 &  3  13  14 & 13 &  8  5  10 \\
6 &  3  9  15 & 14 &  5  11  15  16 \\
7 &  3  16  10  11 & 15 &  8  14  6 \\
8 &  3  13  15 & 16 &  7  12  14  \\
\end{tabular}
\end{minipage}

\bigskip

\begin{minipage}[t]{5.0cm}
An embedding of $K_9$ with genus $3$
and only one face not a triangle:\hfill

\medskip

\begin{tabular} {l|l}
vertex & order of \\
 & neighbours \\
\hline
1 &  2  3  4  5  6  7  8  9 \\
2 &  1  9  6  4  8  5  3  7 \\
3 &  1  6  8  7  2  5  9  4 \\
4 &  1  3  9  8  2  6  7  5 \\
5 &  1  4  7  9  3  2  8  6 \\
6 &  1  5  8  3  4  2  9  7 \\
7 &  1  6  9  5  4  2  3  8 \\
8 &  1  7  3  6  5  2  4  9 \\
9 &  1  8  4  3  5  7  6  2 \\
\end{tabular}
\end{minipage}\hfill
\begin{minipage}[t]{7cm}
The dual of the embedding of $K_9$:\hfill

\medskip

\begin{tabular} {l|l||l|l}
vertex & order of & vertex & order of \\
 & neighbours & & neighbours \\
\hline
1 &  2  3  4  5  6  7 & 13 &  4  22  7 \\
2 &  1  8  9 & 14 &  5  18  20 \\
3 &  1  10  11 & 15 &  6  22  20 \\
4 &  1  12  13 & 16 &  7  21  18 \\
5 &  1  14  9 & 17 &  8  23  11 \\
6 &  1  15  11 & 18 &  8  14  16 \\
7 &  1  16  13 & 19 &  9  12  23 \\
8 &  2  17  18 & 20 &  10  14  15 \\
9 &  2  19  5 & 21 &  10  12  16 \\
10 &  3  20  21 & 22 &  13  15  23 \\
11 &  3  6  17 & 23 &  17  22  19 \\
12 &  4  19  21 \\
\end{tabular}
\end{minipage}

\bigskip

\begin{minipage}[t]{5cm}
An embedding of $K_{10}$ with genus $4$
and only one face not a triangle:\hfill

\medskip

\begin{tabular} {l|l}
vertex & order of \\
 & neighbours \\
\hline
1 &  2  3  4  5  6  7  8  9  10 \\
2 &  1  10  9  7  6  4  8  5  3 \\
3 &  1  2  5  9  6  8  10  7  4 \\
4 &  1  3  7  9  8  2  6  10  5 \\
5 &  1  4  10  7  9  3  2  8  6 \\
6 &  1  5  8  3  9  10  4  2  7 \\
7 &  1  6  2  9  4  3  5  10  8 \\
8 &  1  7  10  3  6  5  2  4  9 \\
9 &  1  8  4  7  2  10  6  3  5 \\
10 &  1  3  8  7  5  4  6  9  2 \\
\end{tabular}
\end{minipage}\hfill
\begin{minipage}[t]{7cm}
The dual of the embedding of $K_{10}$:\hfill

\medskip

\begin{tabular} {l|l||l|l}
vertex & order of & vertex & order of \\
 & neighbours & & neighbours \\
\hline
1 &  2  3  4 & 16 &  7  28  23 \\
2 &  1  5  6 & 17 &  7  25  21 \\
3 &  1  7  8 & 18 &  8  14  25 \\
4 &  1  9  10 & 19 &  9  15  23 \\
5 &  2  11  8  12  9  13 & 20 &  10  26  12 \\
6 &  2  14  15 & 21 &  10  17  29 \\
7 &  3  16  17 & 22 &  11  29  24 \\
8 &  3  5  18 & 23 &  11  16  19 \\
9 &  4  5  19 & 24 &  12  13  22 \\
10 &  4  20  21 & 25 &  13  18  17 \\
11 &  5  22  23 & 26 &  14  20  28 \\
12 &  5  20  24 & 27 &  15  29  28 \\
13 &  5  25  24 & 28 &  16  27  26 \\
14 &  6  26  18 & 29 &  21  27  22 \\
15 &  6  19  27 \\
\end{tabular}
\end{minipage}

\bigskip

\begin{minipage}[t]{6.0cm}
An embedding of $K_{14}$ with genus $10$
and only one face not a triangle:\hfill

\medskip

\begin{tabular} {l|l}
vertex & order of neighbours \\
\hline
1 &  2  3  4  5  6  7  8  9  10  11  12  13  14 \\
2 &  1  14  8  11  9  13  6  12  5  7  4  10  3 \\
3 &  1  2  10  9  5  8  7  14  11  6  13  12  4 \\
4 &  1  3  12  9  6  11  13  8  14  10  2  7  5 \\
5 &  1  4  3  9  14  13  11  7  2  12  8  10  6 \\
6 &  1  5  10  12  2  13  3  11  4  9  8  14  7 \\
7 &  1  6  4  2  5  11  10  13  9  12  14  3  8 \\
8 &  1  7  3  4  13  10  5  12  11  2  14  6  9 \\
9 &  1  8  6  4  12  7  13  2  11  14  5  3  10 \\
10 &  1  9  3  2  4  14  12  6  5  8  13  7  11 \\
11 &  1  10  7  5  13  4  6  3  14  9  2  8  12 \\
12 &  1  11  8  5  2  6  10  14  7  9  4  3  13 \\
13 &  1  12  3  6  2  9  7  10  8  4  11  5  14 \\
14 &  1  13  5  9  11  3  7  12  10  4  6  8  2 \\
\end{tabular}
\end{minipage}\hfill
\begin{minipage}[t]{6.5cm}
The dual of the embedding of $K_{14}$:\hfill

\medskip

\begin{tabular} {l|l||l|l}
vertex & order of & vertex & order of\\
 & neighbours & & neighbours \\
\hline
1 &  2  3  4 & 30 &  16  46  47 \\
2 &  1  5  6 & 31 &  17  48  23 \\
3 &  1  7  8 & 32 &  17  49  26 \\
4 &  1  9  10 & 33 &  19  50  51 \\
5 &  2  11  12 & 34 &  20  46  52 \\
6 &  2  13  14 & 35 &  20  26  53 \\
7 &  3  15  16 & 36 &  21  49  22 \\
8 &  3  17  18 & 37 &  21  40  54 \\
9 &  4  19  20 & 38 &  21  49  55 \\
10 &  4  21  22 & 39 &  22  56  47 \\
11 &  5  23  19 & 40 &  24  37  53 \\
12 &  5  24  25 & 41 &  24  48  29 \\
13 &  6  21  26 & 42 &  25  28  57 \\
14 &  6  27  28 & 43 &  27  56  45 \\
15 &  7  29  21 & 44 &  28  52  51 \\
16 &  7  21  30 & 45 &  29  58  43 \\
17 &  8  31  32 & 46 &  30  55  34 \\
18 &  8  25  21 & 47 &  30  58  39 \\
19 &  9  33  11 & 48 &  31  59  41 \\
20 &  9  34  35 & 49 &  32  36  38 \\
21 &  10  15  36  13  & 50 &  33  57  53 \\
     & 16  37  38  18 & 51 &  33  58  44 \\
22 &  10  39  36 & 52 &  34  59  44 \\
23 &  11  31  27 & 53 &  35  50  40 \\
24 &  12  40  41 & 54 &  37  59  56 \\
25 &  12  18  42 & 55 &  38  46  57 \\
26 &  13  35  32 & 56 &  39  43  54 \\
27 &  14  43  23 & 57 &  42  50  55 \\
28 &  14  42  44 & 58 &  45  51  47 \\
29 &  15  45  41 & 59 &  48  54  52 \\

\end{tabular}
\end{minipage}

}

\subsection{Graphs for the last open cases in the theorem of Plummer and Zha}
{\small
\begin{minipage}[t]{6.5cm}
An embedding  with genus $4$ of the $9$-connected graph
$K_{11}$ minus a maximum matching:\hfill

\medskip

\begin{tabular} {l|l}
vertex & order of neighbours \\

\hline
1 &  3  7  5  11  8  4  10  6  9 \\
2 &  3  6  8  9  11  4  7  10  5 \\
3 &  1  9  8  11  6  2  5  10  7 \\
4 &  1  8  5  9  6  7  2  11  10 \\
5 &  1  7  9  4  8  10  3  2  11 \\
6 &  2  3  11  7  4  9  1  10  8 \\
7 &  6  11  9  5  1  3  10  2  4 \\
8 &  9  2  6  10  5  4  1  11  3 \\
9 &  7  11  2  8  3  1  6  4  5 \\
10 &  8  6  1  4  11  2  7  3  5 \\
11 &  10  4  2  9  7  6  3  8  1  5 \\
\end{tabular}
\end{minipage}\hfill
\begin{minipage}[t]{7.5cm}
An embedding with genus $10$ of the $13$-connected graph
$K_{15}$ minus a maximum matching:\hfill

\medskip

\begin{tabular} {l|l}
vertex & order of neighbours \\
\hline
1 &  3  6  5  10  13  4  15  14  8  11  9  12  7 \\
2 &  3  10  8  9  6  14  7  12  15  4  11  13  5 \\
3 &  1  7  15  10  2  5  9  8  14  12  13  11  6 \\
4 &  1  13  8  10  12  9  5  6  7  14  11  2  15 \\
5 &  1  4  9  3  2  13  15  7  11  8  12  14  10 \\
6 &  2  9  7  4  1  3  11  15  8  13  12  10  14 \\
7 &  6  9  13  10  11  5  15  3  1  12  2  14  4 \\
8 &  9  2  10  4  13  6  15  12  5  11  1  14  3 \\
9 &  7  6  2  8  3  5  4  12  1  11  14  15  13 \\
10 &  8  2  3  15  11  7  13  1  5  14  6  12  4 \\
11 &  10  15  6  3  13  2  4  14  9  1  8  5  7 \\
12 &  13  3  14  5  8  15  2  7  1  9  4  10  6 \\
13 &  11  3  12  6  8  4  1  10  7  9  15  5  2 \\
14 &  12  3  8  1  15  9  11  4  7  2  6  10  5 \\
15 &  14  1  4  2  12  8  6  11  10  3  7  5  13  9 \\
\end{tabular}
\end{minipage}
}

\end{document}